\documentclass[]{article}
\usepackage{amsmath}
\usepackage{amssymb}
\usepackage{mathtools}
\usepackage{graphics}
\usepackage{amsfonts}
\usepackage{tikz} 
\usepackage{forloop}
\usepackage{multirow}
\usepackage{float}
\usepackage[a4paper]{geometry}
\usepackage[toc,title,page]{appendix}
\usepackage{amsthm}
\usepackage{parskip}
\usepackage{mathrsfs}
\usepackage{graphicx}
\usepackage{xtab}
\usepackage{tikz}
\usepackage{forloop}
\usepackage{multirow}
\usepackage{caption}
\usepackage{subcaption}
\usepackage{enumerate}
\usepackage{amsthm}
	\usepackage[utf8]{inputenc}
	\usepackage{setspace}
 
\usepackage[
backend=biber,
style=numeric,
sorting=none
]{biblatex}

\usepackage[section]{placeins}
\usepackage{enumitem}

\usepackage[capitalise]{cleveref}

\addbibresource{references.bib}

\usetikzlibrary{automata,positioning,arrows}
\tikzset{node distance=1.5cm}
\setlength{\belowcaptionskip}{-5pt}
\title{Eco-evolutionary dynamics of trait-structured, sexually
reproducing interacting populations with time-dependent birth and mortality rates}
\author{}

\newtheorem{lemma}{Lemma}

\newtheorem{theorem}{Theorem}
\newtheorem{proposition}{Proposition}

\theoremstyle{remark}
\newtheorem{remark}{Remark}

\usepackage [english]{babel}
\usepackage [autostyle, english = american]{csquotes}
\MakeOuterQuote{"}

\author{
  Manh Hong Duong\\
  \texttt{h.duong@bham.ac.uk}
  \and
  Fabian Spill\\
  \texttt{f.spill@bham.ac.uk}
    \and
 Blaine Van Rensburg\\
\texttt{BXV114@student.bham.ac.uk}
}

\makeatletter

\makeatother

\setlength{\parskip}{0pt}
\setlength\parindent{24pt}

\begin{document}

\maketitle

\begin{abstract}
The coupling between evolutionary and ecological changes (eco-evolutionary dynamics) has been shown to be relevant among diverse species, and is also of interest outside of ecology, i.e. in cancer evolution. These dynamics play an important role in determining survival in response to climate change, motivating the need for mathematical models to capture their complex interplay. Models incorporating eco-evolutionary dynamics often sacrifice analytical tractability to capture the complexity of real systems, do not explicitly consider the effect of population heterogeneity, or focus on long-term behaviour. In order to capture population heterogeneity, both transient, and long-term dynamics, while retaining tractability, we generalise a moment-based method applicable in the regime of small segregational variance to the case of time-dependent mortality and birth. These results are applied to a predator-prey model, where ecological parameters such as the contact rate between species are trait-structured. The trait-distribution of the prey species is shown to be approximately Gaussian with constant variance centered on the mean trait, which is asymptotically governed by an autonomous ODE. In this way, we make explicit the impact of eco-evolutionary dynamics on the transient behaviour and long-term fate of the prey species. 

\end{abstract}

\textbf{Keywords:} PDEs, integro-differential equation, infinitesimal model, moments, eco-evolutionary dynamics

\section{Introduction}
\subsection{Background on eco-evolutionary dynamics}
Predictive studies on the response of biodiversity to climate change are important to inform scientists and policy-makers of risks to biodiversity and possible interventions \cite{bellard2012impacts}. Prediction is complicated since the response of a given species to a changing environment depends on intraspecific variation (phenotype heterogeneity) \cite{moran2016intraspecific,mimura2017understanding,bolnick2011intraspecific}, as well as complex interactions between other species, and the environment more generally (see \cite{hendry2011evolutionary}). Complicating matters is the increasing evidence of coupling between evolutionary and ecological changes, eco-evolutionary dynamics (reviewed in \cite{fussmann2007eco,pelletier2009eco, govaert2019eco,post2009eco,hairston2005rapid}). The eco-evolutionary perspective is also valuable in informing our approach to cancer therapies \cite{adler2019cancer}. 

Mathematical models help illuminate the impacts of eco-evolutionary dynamics. Specifically, they have predicted that the long-term fate of predator and prey species are impacted by the rate at which evolutionary processes occur \cite{marrow1996evolutionary,dieckmann1996dynamical}, and that host-parasite dynamics are qualitatively altered when bi-directional feedback between ecology and evolution is specifically included \cite{ashby2019understanding,best2023fluctuating}, i.e. cyclic dynamics emerge due to evolution. However, these models do not explicitly include the processes that maintain variance on the population, such as mutation. The limiting solution as mutation rate approaches $0$ for selection-mutation models may be qualitatively different than the solution with zero mutation rate (see \cite{lorenzi2020asymptotic} for an example), so it is important to determine the behaviour of models for small, but non-zero, population heterogeneity

Integro-differential equations provide a mathematical framework to model natural selection in phenotypically heterogeneous populations, and have tremendous utility in understanding how a diverse range of factors affect evolution, such as time or space heterogeneity  \cite{iglesias2021selection,roques2020adaptation,figueroa2018long, bouin2015hamilton},  the competition between populations exhibiting different dispersal rates \cite{lam2023hamilton, bouin2014travelling}, and the impact of phenotypic heterogeneity on the response of cancers to therapy \cite{ardavseva2020comparative,lorenzi2015dissecting,lorenzi2016tracking}. However, these models do not incorporate eco-evolutionary dynamics. Integro-differential equation models which do incorporate eco-evolutionary dynamics have provided unique insights into evolutionary branching \cite{diekmann2005dynamics}, the interaction between stromal cells and stem cells \cite{nguyen2019adaptive}, and the co-evolution of predators and prey \cite{calsina1994non}. While these models  specifically involve mutational processes, they focus on stationary solutions (the evolutionary stable strategies in the language of adaptive dynamics) and so miss the transient dynamics which may be of interest in applications, especially if realistic parameters would require a longer time to converge than the time-frame of interest. Moreover, this means they are not readily adapted to the case where the attractor of the system is not a fixed point. 

The aim of this paper is therefore to study the full eco-evolutionary dynamics of trait-structured, sexually reproducing interacting populations, in which the death and/or birth rates are time-dependent. We employ a generalised moment based method, which enables us to study eco-evolutionary dynamics including the changes in population heterogeneity while retaining tractability.

\subsection{Models}

\subsection*{A single-species model} We first consider the following nonlinear integro-differential evolution equation that models the evolutionary dynamics of a trait-structured, sexually reproducing population:
\begin{equation} \label{eqn:main}
\begin{cases}
\varepsilon^2 \partial_t   n_\varepsilon(x,t)= r_{\varepsilon}(t) T_{\varepsilon}  [ n_\varepsilon](x,t)  
-   \left( \tilde{m}_{\varepsilon}(x,t)  + \kappa \rho_\varepsilon(t) \right) n_\varepsilon(x,t) ,\\
\rho_\varepsilon(t)= \int_{\mathbb R}   n_\varepsilon(y,t)dy,\\
n_\varepsilon(x,0)=n_{\varepsilon,0}(x),
\end{cases}
\end{equation}
where $(x,t)\in{}\mathbb{R}\times(0,\infty)$ and
\begin{equation}
\label{eq: Teps}
T_{\varepsilon}[n](x) =\int_{\mathbb R} \int_{\mathbb R} \Gamma_\varepsilon \left(x-\frac{(y+y')}{2}\right) \frac{n(y)n(y')}{\int_{\mathbb{R}}n(z)dz} dydy',\quad \Gamma_\varepsilon(x)= \frac{1}{\varepsilon\sqrt{\pi}} \exp\left(-\frac{x^2}{\varepsilon^2}\right).
\end{equation}
The unknown function $n_\varepsilon(x,t)$ describes the density of the population characterized by a phenotypic trait $x$ at time $t$; $r_\varepsilon: t\in(0,\infty)\mapsto r_\varepsilon(t)\in\mathbb{R}$ represents the time-dependent birth rate while $\tilde{m}_\varepsilon:(x,t)\in \mathbb{R}\times(0,\infty)\mapsto m_\varepsilon(x,t)\in \mathbb{R}$ captures a trait and time-dependent per capita mortality rate. We allow both of these demographic rates to depend on $\varepsilon$ because we aim to apply this to a system where these terms depend on the population $n_\varepsilon(x,t)$. 
 In addition, the
population is also subject to a mortality rate due to uniform competition between individuals, represented by the term $\kappa \rho_\varepsilon$  where $\rho_\varepsilon$ is the total size of the population. The operator $T_\varepsilon$ given in \eqref{eq: Teps} describes the sexual reproduction and is often referred to as the “infinitesimal model” in the literature. The infinitesimal model, first introduced in \cite{fisher1919xv}, assumes that there are many loci, each with a small effect on the fitness, which contribute to the observed one-dimensional trait $x$, and that the traits of the offspring are normally distributed around the average trait of their parents $\frac{(y+y')}{2}$ with a variance $\varepsilon^2$. The infinitesimal model has been widely used in biology and population dynamics; we refer the reader to \cite{barton2017infinitesimal} for a rigorous derivation from a probabilistic viewpoint and to~\cite{turelli2017commentary} for a detailed account of the infinitesimal model. The assumption of small segregational variance $\varepsilon^2$ corresponds to the well-established weak selection regime \cite{burger2000mathematical}, and the coefficient $\varepsilon^2$ multiplying the time derivative is the appropriate time-scaling to capture evolutionary processes. The model \eqref{eqn:main} with time-independent per capita mortality and birth rates, that is, $\tilde{m}_\varepsilon=\tilde{m}_\varepsilon(x)$ and $r_\varepsilon=\mathrm{const}$, is studied recently in \cite{GHM}. In this paper, we extend the results of \cite{GHM} to the case of time-dependent per capita mortality and birth rates.   
\subsection*{A predator-prey model with trait-structured prey}
Next, we consider the following predator-prey system 
\begin{equation} \label{eqn:evolving_prey}
\begin{cases}
\varepsilon^2 \partial_t   n_{1,\varepsilon}= r_1T_{{1,\varepsilon}}  [ n_{1,\varepsilon}]  
-   \left(\frac{f(x)\rho_{2,\varepsilon}}{1+h\bar{f}[q_{1,\varepsilon}](t)\rho_{1,\varepsilon}}  + (\kappa_1-\delta(x)) \rho_{1,\varepsilon} \right) n_{1,\varepsilon} ,\\
\tau \partial_t   \rho_{2,\varepsilon}= \left(\gamma{}\frac{\bar{f}[q_{1,\varepsilon}](t)\rho_{1,\varepsilon}}{1+h\bar{f}[q_{1,\varepsilon}](t)\rho_{1,\varepsilon}} -\kappa_2\rho_{2,\varepsilon}\right)\rho_{2,\varepsilon},\\
\rho_{1,\varepsilon}(t)= \int_{\mathbb R}   n_{1,\varepsilon}(y,t)dy,~~q_{1,\varepsilon}=n_{1,\varepsilon}/\rho_{1,\varepsilon},~~\bar{f}[q_{1,\varepsilon}](t)=\int_{}q_{1,\varepsilon}(y,t)f(y)dy,\\
n_{1,\varepsilon}(x,0)=n_{{1,\varepsilon},0}(x), ~~\rho_{2,\varepsilon}(0)=\rho_{2,\varepsilon,0}.
\end{cases}
\end{equation}

This model extends the single-species model \eqref{eqn:main} to the case of predator-prey interacting populations. Here $n_{1,\varepsilon}:(x,t)\in\mathbb{R}\times (0,\infty)\mapsto \mathbb{R}$ and $\rho_{2,\varepsilon}: t\in (0,\infty)\mapsto \mathbb{R}$ respectively describe the density of the prey with a trait $x$ and the predator population size at time $t$. The trait $x$ is the `riskiness' of prey foraging strategy, i.e. foraging near predator habitats.
The term $\frac{f(x)\rho_{2,\varepsilon}}{1+h\bar{f}[q_{1,\varepsilon}](t)\rho_{1,\varepsilon}}$  represents the Holling Type II functional response of the predator, where contact rate is given by the function $f:x\in\mathbb{R}\mapsto{}f(x)\in [0,\infty)$. The handling time per prey is $h$, and we have assumed that the total handling time depends on the average contact rate.  The carrying capacity of preys is given by $(\kappa_1-\delta(x))$, where the function $\delta:x\in\mathbb{R}\mapsto \delta(x)\in\mathbb[0,\infty)$ accounts for the decrease in competition due to obtaining more resources. As such we require that $\delta(x)<\kappa_1$. Both $f(x)$ and $\delta(x)$ are assumed to be smooth, non-negative, and increasing. The parameter $\kappa_{2}$  corresponds to the predator competition strength,  $\gamma$ is the conversion factor of prey into predator, and $\tau$ sets the timescale for predator dynamics.

In this paper, we focus on the special regime where $\tau\ll\varepsilon^2$ for which \eqref{eqn:evolving_prey} reduces to a single evolution equation for $n_{1,\varepsilon}$, with a modified mortality rate
\begin{equation} \label{eqn:evolving_prey_simplified}
\begin{cases}
\varepsilon^2 \partial_t   n_{1,\varepsilon}= r_1T_{{1,\varepsilon}}  [ n_{1,\varepsilon}]  
-   \left( \frac{\gamma}{\kappa_2}\frac{f(x)\bar{f}[q_{1,\varepsilon}](t)\rho_{1,\varepsilon}}{(1+h\bar{f}[q_{1,\varepsilon}](t)\rho_{1,\varepsilon})^2}+(\kappa_1-\delta(x))\rho_{1,\varepsilon} \right) n_{1,\varepsilon} ,\\
\rho_{1,\varepsilon}(t)= \int_{\mathbb R}   n_{1,\varepsilon}(y,t)dy,~~q_{1,\varepsilon}=n_{1,\varepsilon}/\rho_{1,\varepsilon},~~\bar{f}[q_{1,\varepsilon}](t)=\int_{}q_{1,\varepsilon}(y,t)f(y)dy,\\
n_{1,\varepsilon}(x,0)=n_{{1,\varepsilon},0}(x). 
\end{cases}
\end{equation}
Compared to the single-species model \eqref{eqn:main}, the mortality rate in the reduced model \eqref{eqn:evolving_prey_simplified} depends on the unknown $n_{1,\varepsilon}$ itself, thus adding further nonlocality and nonlinearity. The reduction from the coupled system \eqref{eqn:evolving_prey} to the single equation \eqref{eqn:evolving_prey_simplified} under the 
assumption that $\tau\ll \varepsilon$  resembles the reduction from the parabolic-parabolic Patlak-Keller-Segel model to the parabolic-elliptic one, see for instance~ \cite{keller1970,hadeler2004langevin,blanchet2011}. We will investigate the full coupled system  \eqref{eqn:evolving_prey} in future works.

\subsection{Summary of the main results}

The main results of the paper can be summarised as follows (see Section \ref{sec: assumptions and main results} for precise assumptions and statements). Our first result, Theorem 1, shows that the trait-distribution for a population whose dynamics are given by (\ref{eqn:main}) are approximately Gaussian with constant variance $\varepsilon^2$, centred on a mean trait which is governed by an ODE, the canonical equation of adaptive dynamics. This result is obtained under the assumption that the initial trait distribution is sufficiently concentrated not too far from an optimal trait, and the optimal trait does not vary too quickly. Our second main result, Theorem 2, shows that for the reduced predator-prey model (\ref{eqn:evolving_prey_simplified}), where the mortality is now dependent on the population size and trait distribution, the same Gaussian approximation as before holds. In this case, the obtained evolution equation for the mean trait now expresses explicitly the influence of eco-evolutionary dynamics.


\subsection{Organization of the paper}
The rest of the paper is organized as follows. In Section \ref{sec: assumptions and main results} we provide precise assumptions and the main results of the paper together with an overview of the proofs. The proofs of Theorem \ref{thm:main} and Theorem \ref{thm:IP_main} are then given in Section \ref{sec: proof of 1st theorem} and Section \ref{sec: proof of 2nd thm} respectively. In Section \ref{sec: numerics} we show numerical simulations to demonstrate the analytical results. In the last Section \ref{sec:Conclusion} we provide further discussion and outlook.

\section{Assumptions and main results}
\label{sec: assumptions and main results}
In this section, we provide precise assumptions and the main results concerning the asymptotic analysis of the co-evolutionary dynamics \eqref{eqn:main} and \eqref{eqn:evolving_prey_simplified}.
\subsection{Single Species Model}
We first focus on (\ref{eqn:main}) and recall some useful facts and notations from \cite{GHM}. We define the trait distribution $q_\varepsilon$ as
\begin{equation}
\label{eq: qeps}
q_\varepsilon(x,t)=\frac{n_\varepsilon(x,t)}{\rho_\varepsilon(t)}=\frac{n_\varepsilon(x,t)}{\int_{\mathbb{R}}n_\varepsilon(y,t)\,dy}.
\end{equation}
Then it follows from \eqref{eqn:main} that $q_\varepsilon(x,t)$ solves the following integro-differential equation
\begin{equation} \label{eqn:distribution}
\varepsilon^2 \partial_t q_\varepsilon = r_\varepsilon(t) \left(\widetilde T_{\varepsilon}[q_\varepsilon] - q_\varepsilon \right) 
-  \left( \tilde{m}_{\varepsilon}(x,t)  - \int_{\mathbb R}  \tilde{m}_{\varepsilon}(x,t)q_\varepsilon(x,t) dx \right) q_\varepsilon ,
\end{equation}
with
\[
\widetilde T_{\varepsilon}[q_\varepsilon](x) =  \int_{\mathbb R} \int_{\mathbb R} \Gamma_\varepsilon \left(x-\frac{(y+y')}{2}\right) q_\varepsilon(y,t)q_\varepsilon(y',t) dydy'.
\]
We define the central moments of the trait distribution by
\begin{subequations}
\label{eq: central moments}
    \begin{align}
   &M_{\varepsilon,1}(t)=\int_{\mathbb{R}}xq_\varepsilon(x,t)dx,\\
    &M_{\varepsilon,k}^c(t)=\int_{\mathbb{R}}(x-M_{\varepsilon,1})^kq_\varepsilon(x,t)dx,~~    M_{\varepsilon,k}^{|c|}(t)=\int_{\mathbb{R}}|x-M_{\varepsilon,1}|^kq_\varepsilon(x,t)dx.
\end{align}
\end{subequations}
We now specify the assumptions of the paper, starting with those for the mortality rate.
\subsection*{Assumptions on the mortality rate}
We assume that $\tilde{m}_{\varepsilon}(x,t)$ can be written as $\tilde{m}_{\varepsilon}(x,t)={m}_{\varepsilon}(x-X_\varepsilon(t),t)$, for some function $X_\varepsilon(t)$ (which could be interpreted as a time-dependent environmental shifting of the minimum of mortality rate),  such that ${m}_\varepsilon(x,t)\in{}C^{\infty,1}(\mathbb{R}^2)$ satisfies the following properties for some $L >0$:
\ \\
\begin{itemize}
\item[(H0)] $m_\varepsilon(x,t)\geq 0$ for all $(x,t)\in \mathbb{R}\times (0,\infty)$ with $\inf_{x\in \mathbb{R}}m_\varepsilon(x,t) = 0$ for all $t\in (0,\infty)$. 
\item[(H1)] $\partial_{x}m_\varepsilon(0,t)  = 0$ and $A_0  \leq \partial_{xx}m_\varepsilon(x,t)$ for all $x\in (-L,L)$,  for some constant $A_{0}>0.$ 
\item[(H2)] $\max_{x \in [-L,L]} m_\varepsilon(x,t) < r_L $, and $r_\varepsilon(t)>r_L$, where $r_L>0$ is independent of $\varepsilon$.
\item[(H3)] $|\partial_{xxx}m_\varepsilon(x,t)| \leq A_m  (1+|x|^{p-1})$ for all $x\in \mathbb{R}$ and some exponent $p \in \mathbb N^*$ with $p>1$ and constant $A_m \in (0,\infty)$. 
\end{itemize}
\ \\
\begin{remark}
The biological interpretation of (H0) is that mortality is always positive. The assumption (H1) ensures that the local minimum mortality at $x=0$ is strict. The assumption (H2) ensures that the birth rate is positive  near that local minimum. The last assumption (H3) is technical to bound the growth at infinity.
\end{remark}
\subsection*{Assumptions on the initial data} 
Suppose that the initial data satisfy the following conditions.
\ \\
\begin{itemize}
    \item[(H4)] $M_{\varepsilon,1}(0)\in(-L,L)$, and $\varepsilon^{-2k_0}M^{c}_{\varepsilon,2k_0}(0)\leq{}C_1$, with $k_0$ large enough that \[r_L\left(1-\frac{1}{4^{k_0}}\right)-\max_{(x,t)\in[-L,L]\times[0,\infty)}m_\varepsilon(x,t):=\eta>0.\]
Moreover, $q_{\varepsilon}(x,0)\leq{}C_2\exp(-\beta|x|)$ for a positive constant $\beta$.

    \item[(H5)] $M_{\varepsilon,1}(0)=x_0+O(\varepsilon)>0$, for $x_0\in(-L,L)$.
    \item[(H6)] $\rho_m\leq{}\rho_\varepsilon(0)\leq{}\rho_M$ for positive constants $\rho_m$ and $\rho_M$. 
    \end{itemize}
\ \\    
    \begin{remark}
   The assumption (H4) ensures firstly that the trait-distribution is initially concentrated, and moreoever that a dissipative term appears in the evolution equation of $M_{\varepsilon,2k_0}$. Assumption (H5) requires that the mean trait is initially not far from a local minimum of the mortality rate. The last assumption (H6) specifies that the $\varepsilon$-independent bounds to the initial population size.
\end{remark}
Inspired from \cite{GHM}, we aim to show that the trait distribution $q_\varepsilon$ may be approximated, in the $L_1$ Wasserstein distance (see below for the definition), by a Gaussian distribution $g_\varepsilon$ defined as
\begin{equation}\label{eqn:approximate_distribution}
    g_\varepsilon(x,t)=\frac{1}{\sqrt{2\pi\varepsilon}}\exp\left(-\frac{(x-\bar{Z}_\varepsilon(t))^2}{2\varepsilon^2}\right),
\end{equation}
where  the approximate mean trait $\bar{Z}_\varepsilon$ solves the following equation
\begin{equation}
\label{eqn:approx_mean_trait}
\left\{
\begin{aligned}
\dot{\bar{Z}}_\varepsilon(t) &= -  \partial_{x}m_\varepsilon(\bar{Z}_\varepsilon(t)-X_\varepsilon(t),t), \\
\bar{Z}_\varepsilon(0) & = M_{\varepsilon,1}(0).  
\end{aligned}
\right.
\end{equation}
This may be recognised as the canonical equation in adaptive dynamics \cite{champagnat2002canonical}. The assumptions (H1)-(H6) are directly adapted from \cite{GHM}. The next assumptions are needed to deal with the time-dependence in the mortality function, which is new.
\ \\
\begin{itemize}
    \item[(H7)] $X_{\varepsilon}(0)=0$, $X_\varepsilon(t)\in{}C^{1}(\mathbb{R})$, $\int_{0}^{t}|\dot{X}_\varepsilon(s)|e^{-\frac{\eta}{\varepsilon^2}(t-s)}ds<\varepsilon{}L_X$, where  $L_X>0$ is a constant.
    \item[(H8)] There exists an $X_0(t)\in{}C^{1}(\mathbb{R})$, $m_0(x,t)\in{}C^{2,1}(\mathbb{R}^2)$, and $r_0(t)$ such that $\partial_{x}m_\varepsilon(x,t)\xrightarrow[\varepsilon\rightarrow{0}]{}\partial_{x}m_0(x,t)$ and $X_\varepsilon(t)\xrightarrow[\varepsilon\rightarrow{0}]{}X_0(t)$ uniformly in $t$, and in particular $|r_\varepsilon(t)-r_0(t)|\leq{}K_r\varepsilon.$
\end{itemize}
\ \\
Under the above assumption, the mean trait $\bar{Z}_\varepsilon(t)$ converges to $\bar{Z}_0(t)$, which solves
\begin{equation}
\label{eqn:limiting_problem}
\left\{
\begin{aligned}
\dot{\bar{Z}}_0(t) &= -  \partial_{x}m_0(\bar{Z}_0(t)-X_0(t),t), \\
\bar{Z}_0(0) & = x_0,  
\end{aligned}
\right.
\end{equation}

\begin{remark}
The control on the derivative in assumption (H7) of the moving optimal trait $X_\varepsilon(t)$ is general enough to handle, for instance $X_\varepsilon(t)=\cos\left(\frac{t}{\varepsilon}\right)$ which oscillates rapidly. However, it should be noted that this choice does not uniformly converge to any $X_0\in{}C^{1}(\mathbb{R})$, so (H8) does not apply for such a function. We could use an averaging procedure to capture the limiting behaviour, but for simplicity we restrict ourselves to those $X_\varepsilon(t)$ which do converge uniformly, as in (H8).
\end{remark}
We lastly introduce an assumption  to control the lag ${Y}_\varepsilon(t):=\bar{Z}_\varepsilon(t)-X_\varepsilon(t)$  which could cause the mean trait to stray from the local minimum $X_\varepsilon(t)$. 
 \ \\
 \begin{itemize}
     \item[(H9)] There exists a constant $\gamma>0$ such that for any $\varepsilon>0$ we have $\sup_{t\geq{0}}|Y_\varepsilon(t)|<L-\gamma$.
 \end{itemize}
 
\subsection*{Wasserstein distances}
Let $\mu$ and $\nu$ be two probability measures on $\mathbb{R}^d$ with finite $p$-th moments for $p\in [1,\infty)$. The $L^p$-Wasserstein distance $W_p(\mu,\nu)$ between $\mu$ and $\nu$ is given by
\begin{equation}
 W_p(\mu,\nu)=\Big(\inf_{\pi\in \Pi(\mu,\nu)}\int_{\mathbb{R}^d\times \mathbb{R}^d}|x-y|^p d\pi(x,y)\Big)^{1/p},   
\end{equation}
where $\Pi(\mu,\nu)$ is the set of all couplings between $\mu$ and $\nu$, that is the set of all probability measures on $\mathbb{R}^d\times \mathbb{R}^d$ that have $\mu$ and $\nu$ as the first and second marginal respectively. The Wasserstein distances provide useful and flexible tools for quantifying the difference between two probability measures and have been used extensively in many branches of mathematics such as optimal transport, probability theory and partial differential equations. We refer the reader to Villani's monograph \cite{villani2021topics} for a detailed account of this topics.

The following theorem for the single-species model is the first main result of this paper. It shows that $q_\varepsilon(x,t)$ is quantitatively approximated by the Gaussian distribution $g_\varepsilon(x,t)$ in the Wasserstein distance $W_1$, and characterises the limiting behaviour of the total population $\rho_\varepsilon(t)$ in terms of the limiting mean trait $\bar{Z}_0$. This theorem extends \cite[Theorem 2]{GHM} to the case of time-dependent mortality and birth rates.

\subsection*{Main result on the single-species model} 
\begin{theorem}
\label{thm:main}
Assume (H1)--(H5). \\
(i) Let $\delta\in (0,1)$.  There exists a constant $K$ depending on the initial condition,   {$r_0(x,t) $ and $m_0(x,t)$} such that,  for any $\varepsilon$ sufficiently small, there holds 
\begin{equation}
\label{est-main-Was}
\sup_{t\in[0,+\infty)} W_1(q_{\varepsilon }(\cdot,t),g_{\varepsilon }(\cdot,t)) \leq  K \left( W_1(q_{\varepsilon,0},g_{\varepsilon,0}) + \varepsilon^{1-\delta} \right).
\end{equation}
with $q_{\varepsilon,0}$ and $g_{\varepsilon,0}$ being the respective values at time $t=0$ of $q_\varepsilon$ and $g_{\varepsilon}.$\\
(ii) Assume additionally (H6)--(H8) and let $\delta\in (0,1),$ and $\beta\in (1,2)$. Then, there exists a constant $K_\rho$ such that, for any $\varepsilon$ sufficiently small, we have
\begin{equation}
\label{as-rho}
| \rho_\varepsilon(t)-\rho(t)|\leq K_\rho\varepsilon^{1-\delta},\qquad \rho(t)=\frac{r_0(t)-m_0(\bar{Z}_0(t),t)}{\kappa},\quad \text{for all $t\in [\varepsilon^\beta,+\infty)$}
\end{equation}
  where $\bar{Z}_0$ solves \eqref{eqn:limiting_problem}. Moreover, as $\varepsilon\to 0$, $n_\varepsilon$ converges in $C\left((0,\infty) ; \mathcal{M}^1(\mathbb{R}) \right)$ to a measure $n$, which is given by 
 $$
 n(x,t)=\rho(t)\delta_{x-\bar{Z}_0(t)}.
 $$
\end{theorem}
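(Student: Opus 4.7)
The plan is to adapt the moment-closure strategy of \cite{GHM} to the time-dependent setting, in four steps: (1) establish uniform-in-time control of the central moments $M_{\varepsilon,k}^c$; (2) derive a perturbed ODE for the mean $M_{\varepsilon,1}$ and compare it with the canonical equation \eqref{eqn:approx_mean_trait} for $\bar Z_\varepsilon$; (3) convert these estimates into the $W_1$ bound \eqref{est-main-Was}; (4) for part (ii), run a fast-relaxation argument on the equation for $\rho_\varepsilon$. For step (1), I would test \eqref{eqn:distribution} against $(x-M_{\varepsilon,1})^k$ for $k=2,\ldots,2k_0$. The infinitesimal operator $\widetilde T_\varepsilon$ acts on even central moments as a relaxation toward the Gaussian values of order $\varepsilon^k$: if $q_\varepsilon$ were Gaussian of variance $\sigma^2$, then $\widetilde T_\varepsilon[q_\varepsilon]$ would be Gaussian of variance $\varepsilon^2/2+\sigma^2/2$, so the induced dynamics for $M_{\varepsilon,2}^c$ contracts toward $\varepsilon^2$. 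The mortality, after Taylor-expanding $m_\varepsilon(\cdot - X_\varepsilon(t),t)$ around $0$ using (H1), contributes a coercive dissipation of order $A_0 M_{\varepsilon,k+2}^c$, closing the hierarchy at order $2k_0$ thanks to (H4). A Gronwall argument on $\varepsilon^{-2k_0}M_{\varepsilon,2k_0}^{|c|}$, with source terms controlled by (H3), (H2), and the integral bound in (H7), yields $M_{\varepsilon,2k}^{|c|}(t)\lesssim \varepsilon^{2k}$ uniformly in $t$ for $k\le k_0$.

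For step (2), testing \eqref{eqn:distribution} against $x$ and Taylor-expanding $\partial_x m_\varepsilon$ at $M_{\varepsilon,1}-X_\varepsilon$ gives
\begin{equation*}
\dot M_{\varepsilon,1}(t) = -\partial_x m_\varepsilon(M_{\varepsilon,1}(t)-X_\varepsilon(t),t) + O(\varepsilon^{2-\delta}),
\end{equation*}
the remainder being controlled by $M_{\varepsilon,3}^c$ from step (1). Subtracting \eqref{eqn:approx_mean_trait} and using strict convexity of $m_\varepsilon$ on $[-L,L]$ from (H1) together with the lag bound (H9) to stay in the convex zone, Gronwall gives $|M_{\varepsilon,1}(t)-\bar Z_\varepsilon(t)|\le K\varepsilon^{1-\delta}$. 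For step (3), I split $W_1(q_\varepsilon,g_\varepsilon)\le W_1(q_\varepsilon,\tilde g_\varepsilon)+W_1(\tilde g_\varepsilon,g_\varepsilon)$, where $\tilde g_\varepsilon$ is the Gaussian of variance $\varepsilon^2$ centred at $M_{\varepsilon,1}$. The second term equals $|M_{\varepsilon,1}-\bar Z_\varepsilon|$. The first is controlled by truncating at $|x-M_{\varepsilon,1}|\lesssim \varepsilon^{1-\delta/(2k_0)}$: on the bulk, the agreement of moments up to order $2k_0$ gives the desired rate via a standard matching-moments estimate, and on the tail the exponential bound in (H4) (propagated through the equation) yields a super-algebraically small contribution. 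The initial defect $W_1(q_{\varepsilon,0},g_{\varepsilon,0})$ enters through the Gronwall constants in steps (1)--(2).

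For step (4), from $\varepsilon^2\dot\rho_\varepsilon=\rho_\varepsilon\bigl(r_\varepsilon(t)-\int \tilde m_\varepsilon q_\varepsilon\,dx-\kappa\rho_\varepsilon\bigr)$, the quadratic expansion of $m_\varepsilon$ combined with step (1) yields $\int \tilde m_\varepsilon q_\varepsilon=\tilde m_\varepsilon(M_{\varepsilon,1},t)+O(\varepsilon^{2-\delta})$. Using (H8) to pass $m_\varepsilon\to m_0$, $X_\varepsilon\to X_0$, and step (2) so that $M_{\varepsilon,1}\to \bar Z_0$, this becomes a logistic ODE with fast relaxation rate $\eta/\varepsilon^2$ and moving equilibrium $\rho(t)=(r_0(t)-m_0(\bar Z_0(t),t))/\kappa$. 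The threshold $t=\varepsilon^\beta$ with $\beta<2$ is exactly where the transient $e^{-\eta t/\varepsilon^2}$ is $o(\varepsilon^{1-\delta})$; (H6) ensures the initial data are $\varepsilon$-uniformly bounded away from $0$ and $\infty$, so the logistic flow is non-degenerate. The narrow convergence $n_\varepsilon\to\rho(t)\delta_{x-\bar Z_0(t)}$ in $C((0,\infty);\mathcal M^1(\mathbb R))$ then follows from $q_\varepsilon\to\delta_{\bar Z_0(t)}$ via part (i) and the quantitative bound on $\rho_\varepsilon-\rho$.

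The main obstacle, relative to the autonomous case in \cite{GHM}, is uniform-in-time propagation of the moment and mean-trait bounds under the non-autonomous forcing $X_\varepsilon(t)$, $r_\varepsilon(t)$: the drift could a priori drag $\bar Z_\varepsilon$ out of the convex zone where dissipation works. Assumption (H9) is the a priori confinement that closes this loop, while the integrated bound in (H7) is precisely what is needed to absorb possibly rapid oscillations of $X_\varepsilon$ against the exponential relaxation kernel $e^{-\eta(t-s)/\varepsilon^2}$ at a cost of only $O(\varepsilon)$. A secondary delicate point is sharpening the moment control into the $W_1$ rate $\varepsilon^{1-\delta}$: the tail truncation above is essential, since a naive Cauchy--Schwarz from even moments loses an additional half-power of $\varepsilon$.
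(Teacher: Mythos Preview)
Your architecture (moment estimates $\to$ mean-trait ODE $\to$ $W_1$ bound $\to$ fast relaxation for $\rho_\varepsilon$) matches the paper. However, in step~(1) you misidentify the dissipation mechanism for the higher central moments. A term of order $A_0 M_{\varepsilon,k+2}^c$ in the equation for $M_{\varepsilon,k}^c$ couples \emph{upward} and would open the hierarchy, not close it. In the paper the coercivity for $M_{\varepsilon,2k}^c$ comes from the reproduction operator, not the mortality: one computes
\[
\int (x-M_{\varepsilon,1})^{2k}\,\widetilde T_\varepsilon[q_\varepsilon]\,dx=\tfrac{2}{4^k}M_{\varepsilon,2k}^c+(\text{lower-order terms}),
\]
so the linear damping coefficient in the evolution of $M_{\varepsilon,2k}^c$ is $r_\varepsilon(t)\bigl(1-4^{-k}\bigr)-m_\varepsilon(\widetilde M_{\varepsilon,1},t)$, and it is the choice of $k_0$ in (H4) together with (H2) that makes this $\ge\eta>0$. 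The convexity (H1) is used only for the \emph{first} moment, i.e.\ in your step~(2).

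Your step~(3) also diverges from the paper. There one does not argue statically from moments; instead one runs a dynamic duality estimate: for Lipschitz $\phi$, set $I_\phi=\int\phi\,(q_\varepsilon-g_\varepsilon)$, derive $\varepsilon^2\dot I_\phi+r_\varepsilon I_\phi=F$, and bound $F$ via the contraction $W_2(\widetilde T_\varepsilon[g_1],\widetilde T_\varepsilon[g_2])\le\tfrac{1}{\sqrt2}W_2(g_1,g_2)$ together with the moment bounds, then Gronwall and $\sup_\phi$. Your ``matching-moments'' justification is not valid as written, since beyond the variance the central moments of $q_\varepsilon$ and $\tilde g_\varepsilon$ are only of the same \emph{order} $\varepsilon^{2k}$, not equal, and agreement of finitely many moments does not control $W_1$. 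That said, a much simpler static bound does work in your framework: since $q_\varepsilon$ and $\tilde g_\varepsilon$ share mean $M_{\varepsilon,1}$ and both have second central moment $O(\varepsilon^2)$, one has $W_1(q_\varepsilon,\tilde g_\varepsilon)\le\int|x-M_{\varepsilon,1}|\,(q_\varepsilon+\tilde g_\varepsilon)=O(\varepsilon)$, already stronger than the target $O(\varepsilon^{1-\delta})$. So your truncation and the concern about ``losing a half-power of $\varepsilon$'' are unnecessary, although this route does not by itself produce the explicit $W_1(q_{\varepsilon,0},g_{\varepsilon,0})$ dependence in \eqref{est-main-Was} that the paper's Gronwall-based argument yields.
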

In the above theorem, $\delta_{x-y}$ denotes the Dirac mass centred at $y$.
\subsection{Predator-Prey Model}
We now focus on the predator-prey model \eqref{eqn:evolving_prey_simplified}. In the following we introduce the variables $f^{*}=\sqrt{\frac{\gamma}{\kappa_2}}f$ and $h^{*}=\sqrt{\frac{\kappa_2}{\gamma}}h$ so that (once we omit the asterisk for notational convenience) the model reads
\begin{equation} \label{eqn:IP_standard_form}
\begin{cases}
\varepsilon^2 \partial_t   n_{1,\varepsilon}= r_1T_{{1,\varepsilon}}  [ n_{1,\varepsilon}]  
-   \left( \tilde{m}_{\varepsilon}(x,t)  + \kappa_1 \rho_{1,\varepsilon} \right) n_{1,\varepsilon} ,\\
\rho_{1,\varepsilon}(t)= \int_{\mathbb R}   n_{1,\varepsilon}(y,t)dy,\\
n_{1,\varepsilon}(x,0)=n_{{1,\varepsilon},0}(x), 
\end{cases}
\end{equation}
where $\tilde{m}_\varepsilon(x,t)=(F(x,\rho_{1,\varepsilon}(t),\bar{f}[q_{1,\varepsilon}](t))-\delta(x))\rho_{1,\varepsilon}(t)$ and we define $F:(x,I,C)\in\mathbb{R}^3\mapsto{}\mathbb{R}$ by $F(x,I,C)=\frac{f(x)C}{(1+hCI)^2}$.

Our aim is also to quantitatively derive the asymptotic limits for the mean trait and the population size in the regime of small variance $\varepsilon\ll 1$. Due to the additional nonlinearity and non-locality arising from the trait-dependent mortality rate, the analysis of this model is much more intricate than the single-species model. The key to overcome this challenge is to find suitable assumptions on the contact rate $f(x)$ and competition-modification $\delta(x)$ such that the hypotheses (H1)-(H9) are satisfied. To do this, we first need to identify $m_\varepsilon(x,t)$ and $X_\varepsilon(t)$ by considering the form of the limiting equations. It follows from \eqref{eqn:evolving_prey_simplified} that the population size and approximate mean trait respectively satisfy the following equations:
\begin{equation}\label{eqn:IP_population_size}
\varepsilon^{2}\frac{d\rho_{1,\varepsilon}}{dt}=\left(r_1-\int_{\mathbb{R}}\tilde{m}_\varepsilon(y,t)q_{1,\varepsilon}(y,t)dy-\kappa_1\rho_{1,\varepsilon}\right)\rho_{1,\varepsilon},
\end{equation}
and 
\begin{align*}
    \dot{\bar{Z}}_\varepsilon=&-\partial_{x}\tilde{m}_\varepsilon(\bar{Z}_\varepsilon,t)\\
    =&-(\partial_{x}F(x,\rho_{1,\varepsilon}(t),\bar{f}[q_{1,\varepsilon}](t))-\partial_{x}\delta(\bar{Z}_\varepsilon))\rho_{1,\varepsilon}(t).
\end{align*}
Then, formally, the approximations $M_{\varepsilon,1}(t)\approx{}\bar{Z}_{\varepsilon}(t)$ and $q_\varepsilon(x,t)\approx{}g_\varepsilon(x,t)$ give $\rho_{1,\varepsilon}\approx{}I(\bar{Z}_\varepsilon)$ where $I(x)$ is a positive solution to 
\[r_1-(F(x,I,f(x))^2+\kappa_1-\delta(x))I=0.\]
We will introduce assumptions to ensure $I(x)$ is locally unique. Presuming uniqueness for now, substituting $\rho_{1,\varepsilon}(t)=I(\bar{Z}_\varepsilon)$ back into $\tilde{m}_\varepsilon(x,t)$, and estimating $\bar{f}[q_{1,\varepsilon}](t)\approx{}f(\bar{Z}_\varepsilon)$, we find the effective equation for $\bar{Z}_\varepsilon$ is 
\begin{equation}\label{eq:trait_dynamics}
    \dot{\bar{Z}}_\varepsilon=-(\partial_{x}F(\bar{Z}_\varepsilon,I(\bar{Z}_\varepsilon),f(\bar{Z}_\varepsilon))-\partial_{x}\delta(\bar{Z}_\varepsilon))I(\bar{Z}_\varepsilon).
\end{equation}
These calculations inform the following assumptions.
\subsection*{Assumptions on the contact rate and competition strength}

The following are natural assumptions on the trait-dependent contact rate and competition strength.
\ \\
\begin{itemize}
    \item[(A0)] The functions $f(x)$ and $\delta(x)$ are $C^3(\mathbb{R})$. Also, $\kappa_1-\delta(x)>\kappa^{*}$ for some constant $\kappa^{*}>0$.
    \end{itemize}
\ \\
We here assume the trait-dependent contact rate and competition strength are sufficiently smooth for subsequent calculations. The last part of (A0) corresponds to the biological assumption that there is a positive competition-strength regardless of resource gain. 

Our next assumption ensures there is an equilibrium point of the trait dynamics (\ref{eq:trait_dynamics}), and that there is a unique stable manifold to which the population dynamics (\ref{eqn:IP_population_size}) are attracted. We first define the  following function which captures the approximate per captia growth rate of the prey: $G: (x,I)\in\mathbb{R} ^2\mapsto\mathbb{R}$ such that $G(x,I)=r_1-(F(x,I,f(x))^2+\kappa_1-\delta(x))I$. We assume
\ \\
\begin{itemize}
    \item[(A1)] There exists $x^{*}\in\mathbb{R}$ and constants $L_1,G^{*}>0$ such that for any $x$ satisfying $|x-x^{*}|<L_1$ we have $\partial_{I}G(x,I)<-G^{*}$, and the equation
$G(x,I)=0$ has a positive solution $I(x)$ which is continuous as a function of $x$. Moreover, $\partial_{x}F(x^{*},I(x^{*}),f(x^{*}))-\partial_{x}\delta(x^{*})=0$.
    \end{itemize}
\ \\
The assumptions on the $I$-derivative of $G$ ensures that $I(x)$ is associated to a stable equilibrium of the population dynamics. We specify that $I(x)$ changes continuously i.e. the root is non-degenerate as $F(x,I,f(x))$ and $\delta(x)$ vary in $x$ locally to the fixed point $x^{*}$. As a consequence, if $|x-x^{*}|<L_1$, there are constants $I_L,I_U>0$, depending on the $f$, $\delta$ and parameters other than $\varepsilon$, such that there is a unique solution $I(x)$ of $G(x,I)=0$ that satisfies $I_L<I(x)<I_U$.

Our next assumption ensures the convexity required in (H1) (and therefore the stability of the fixed point $x^{*}$ for the evolution equation of the mean trait), as well as the local uniqueness of the equilibrium point. 
We assume
 \ \\   
    \begin{itemize}
    \item[(A2)] 
    For $x$ satisfying $|x-x^{*}|<L_1$ we have $\partial_{xx}F(x,I(x),f(x))-\partial_{xx}\delta(x)>A_0$ for a constant $A_0>0$.
    
\end{itemize} 
\ \\
Based on (A2), it is clear that, by continuity, there exist positive constants $L_2$ and $L_3$ such that for $|x-x^{*}|<L_1$, $|I-I(x)|<L_2$, and $|C-f(x)|<L_3$ we have that $\partial_{xx}F(x,I,C)-\partial_{xx}\delta(x)>A_0$ (potentially taking a smaller $A_0$ and $L_1$). This implies, there is a unique minimum in $x$ of $F(x,I,C)-\delta(x)$ provided that  $x$, $I$, and $C$ respect those bounds. We denote this as $x_m(I,C)$. The next assumption on the initial data arises since we require that $X_\varepsilon(t)=x_m(\rho_{1,\varepsilon}(t),\bar{f}[q_{1,\varepsilon}](t))$ remains unique for $t>0$. Moreover, we note that $X_\varepsilon(t)$ depends on $\rho_{1,\varepsilon}(t)$, and so the derivative depends on $\frac{d\rho_{1,\varepsilon}(t)}{dt}$. Therefore, in order to control the magnitude of this derivative, we need to assume $\rho_\varepsilon(t)$ is initially close to $I(x_0)$. 
\ \\
\begin{itemize}
    \item[(A3)] We assume that
 $M_{\varepsilon,1}=x_0+O(\varepsilon)$ where $|x_0-x^{*}|<L_1$ and {$|\rho_{1,\varepsilon}(0)-I(x_0)|<K_3\varepsilon$} for a constant $K_3>0$.
\end{itemize}
\ \\
Since we will also need to control $\rho_{1,\varepsilon}(t)$, the next assumption is introduced to control the growth rate of $m_\varepsilon(x,t)$ for large $|x|$ in (H3).
\ \\
\begin{itemize}
    \item[(A4)] For all $x\in\mathbb{R}$ we have $|f''(x)|+|\partial_{xx}\delta(x)|<A_m(1+|x|^{p})$ for a constant $A_m>0$ and exponent $p\in\mathbb{N}$, $p>1$.
\end{itemize}
\ \\
With these assumptions in mind, we shift the $x$-coordinate via a change of variables $\tilde{x}=x+x_{m}(\rho_{1,\varepsilon}(0),\bar{f}[q_{1,\varepsilon}](0))$ to ensure that the minimum begins at $0$. For simplicity of notations we will now omit the tilde, write $x_{m,0,\varepsilon}=x_m(\rho_{1,\varepsilon}(0),\bar{f}[q_{1,\varepsilon}](0))$, and will at times suppress the arguments of $x_m(\rho_{1,\varepsilon}(t),\bar{f}[q_{1,\varepsilon}](t))$.
We may now write \[X_\varepsilon(t)=x_m-x_{m,0,\varepsilon},\] 
\[\tilde{m}_\varepsilon(x,t)=\left(F(x+x_{m,0,\varepsilon},\rho_{1,\varepsilon},\bar{f}[q_{1,\varepsilon}](t))-\delta(x+x_{m,0,\varepsilon})\right)\rho_{1,\varepsilon}(t),\]
 and 
\begin{equation}\label{eqn:IP_mortality}
{m}_\varepsilon(x,t)=\left(F(x+x_{m},\rho_{1,\varepsilon},\bar{f}[q_{1,\varepsilon}](t))-\delta(x+x_{m})\right)\rho_{1,\varepsilon}(t).
    \end{equation}
We further propose that limiting mortality function and optimal trait are given by
\begin{align*}
X_0(t)&=x_m(\rho_{1}(t),f(\bar{Z}_0))-x_m(\rho_{1}(0),f(x_0)),\\
m_0(x,t)&=\left(F(x+x_m(\rho_{1}(t),f(\bar{Z}_0)),\rho_1,f(\bar{Z}_0))-\delta(x+x_m(\rho_{1,\varepsilon}(t),\bar{F}[q_{1,\varepsilon}](t)))\right)\rho_{1}(t),
\end{align*}
where the limiting mean trait $\bar{Z}_0(t)$ and total population size $\rho_1(t)$ satisfy
\begin{align*}
   \dot{\bar{Z}}_0&=-\partial_{x}m_0(\bar{Z}_0-X_0(t),t),\\
   \rho_1(t)&=I(\bar{Z}_0).
\end{align*}
Here $I(x)$ is the unique positive root of $G(x,I)$ that satisfies $I_L<I(x)<I_U$. We have will show that $|\bar{Z}_0-x^{*}|<L_1$ which ensures the $I(\bar{Z}_0)$ is well defined for all $t\geq{0}$. Note that substituting $\rho_1(t)$ gives an autonomous differential equation for $\bar{Z}_0$.

Assumption (H4) can be relabelled as (A5) for this section and requires no modification. For the sake of compactness, we include the exponential bound on the initial distribution as well.
\ \\
\begin{itemize}
\item[(A5)] $M_{\varepsilon,1}(0)\in(-L,L)$, and $\varepsilon^{-2k_0}M^{c}_{\varepsilon,2k_0}(0)\leq{}C_1$, with $k_0$ large enough that \[r_1\left(1-\frac{1}{4^{k_0}}\right)-\max_{(x,t)\in[-L,L]\times[0,\infty)}m_0(x,t):=\eta>0.\]
Moreover, $q_{1,\varepsilon}(x,0)\leq{}C_2\exp(-\beta|x|)$ for a constant $\beta>0$.
\end{itemize}
The purpose of (H2) in the single-species model is to ensure the population does not go extinct. This is no longer necessary for this specific model due to the form of the mortality function (\ref{eqn:IP_mortality}), since per capita mortality vanishes for a decreasing population. The irrelevance of (H2) is made explicit in Section \ref{sec: proof of 2nd thm}, during the proof of the main result for the predator-prey model.

\subsection*{Main result on the prey-predator model}
We are now ready to state the second main result of this paper. The following theorem quantitatively establishes the asymptotic dynamics for the mean trait and population size in the regime of small variance.

\begin{theorem}\label{thm:IP_main}
Assume (A0)-(A5). Let $\delta\in(0,1)$, and $\beta\in(1,2)$. Then, there exists a constant $K_\rho$ such that for any sufficiently small $\varepsilon$, we have
\begin{equation}
    |\rho_{1,\varepsilon}(t)-\rho_1(t)|\leq{}K_\rho\varepsilon^{1-\delta}, \text{ for all } t\in[\varepsilon^\beta,\infty),
\end{equation}
and as $\varepsilon\rightarrow{0}$, $n_{1,\varepsilon}$ converges in $C((0,\infty);\mathcal{M}^{1}(\mathbb{R}))$ to a measure $n_1$ given by
\[n_1(x,t)=\rho_1(t)\delta_{x-\bar{Z}_{0}(t)}.\]
Here the limiting mean trait $\bar{Z}_0(t)$ and total population size $\rho_1(t)$ satisfy
\begin{align*}
   \begin{cases}
     \dot{\bar{Z}}_0=-\partial_{x}m_0(\bar{Z}_0-X_0(t),t),\\
     \bar{Z}_0(t)=x_0,
   \end{cases} ~\text{and}~~ \rho_1(t)=I(\bar{Z}_0). 
\end{align*}
Furthermore, $|\bar{Z}_0-x^{*}|<L_1$, where $L_1>0$ is the same as in assumptions (A1) and (A2), and $I(x)$ is the unique positive solution of $r_1-(F(x,I,f(x))^2+\kappa+1-\delta(x))I=0$ that satisfies $I_L<I(x)<I_U$ where $I_L$ and $I_U$ are constants determined by assumption (A1).

\end{theorem}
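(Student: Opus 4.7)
The overall plan is to reduce the prey-only evolution equation \eqref{eqn:IP_standard_form} to a form on which Theorem \ref{thm:main} applies, by verifying hypotheses (H0)--(H9) for the choice $m_\varepsilon$ in \eqref{eqn:IP_mortality} and $X_\varepsilon(t)=x_m(\rho_{1,\varepsilon}(t),\bar f[q_{1,\varepsilon}](t))-x_{m,0,\varepsilon}$. The fundamental difficulty is that $m_\varepsilon$ and $X_\varepsilon$ depend on the unknowns $\rho_{1,\varepsilon}$ and $\bar f[q_{1,\varepsilon}]$, so the hypotheses cannot be checked a priori; instead I will run a continuity/bootstrap argument. Define
\[
T^{*}_{\varepsilon} := \sup\Big\{T>0 : \ |\rho_{1,\varepsilon}(t)-I(\bar Z_\varepsilon(t))|\leq \varepsilon^{1-\delta},\ |Y_\varepsilon(t)|\leq L-\gamma,\ |\bar f[q_{1,\varepsilon}](t)-f(\bar Z_\varepsilon(t))|\leq L_3,\ \forall t\in[0,T]\Big\},
\]
which is positive by (A3) and continuity, and show $T^{*}_{\varepsilon}=\infty$ for all $\varepsilon$ small.

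On $[0,T^{*}_\varepsilon)$ I would first verify the structural hypotheses on $m_\varepsilon(x,t)$. Hypothesis (H0) is immediate since $m_\varepsilon$ vanishes at its minimum; (H1) follows from (A2) together with the bounds defining $T^{*}_\varepsilon$ (which keep $I$ and $C$ in the window where $\partial_{xx}F-\partial_{xx}\delta>A_0$); (H2) follows because the form of $m_\varepsilon$ ensures it is bounded by a constant times $\rho_{1,\varepsilon}$ on $[-L,L]$, so choosing $L$ small and using the bound on $\rho_{1,\varepsilon}$ gives the needed inequality against $r_1$; (H3) follows from (A4). The time-dependence of $X_\varepsilon$ must be controlled: differentiating $X_\varepsilon$ through the implicit definition of $x_m$, one gets $\dot X_\varepsilon$ as a linear combination of $\dot\rho_{1,\varepsilon}$ and $\frac{d}{dt}\bar f[q_{1,\varepsilon}]$. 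Using \eqref{eqn:IP_population_size} one sees $\dot\rho_{1,\varepsilon}=O(\varepsilon^{-2}|\rho_{1,\varepsilon}-I(\bar Z_\varepsilon)|+\varepsilon^{-2}|M_{\varepsilon,1}-\bar Z_\varepsilon|+\ldots)$ up to lower-order terms, while the Gaussian approximation of Theorem \ref{thm:main}(i) (applied on the bootstrap interval) bounds $|\bar f[q_{1,\varepsilon}]-f(\bar Z_\varepsilon)|$ by $O(\varepsilon)$ times a constant; together these give a quantitative bound of the convolution integral in (H7). Hypothesis (H8) is then obtained by passing the closed relations to the limit and identifying $r_0=r_1$, $X_0$ and $m_0$ as in the statement, and (H9) follows from (A1) and the fact that, near the equilibrium $x^{*}$, the drift of the canonical equation \eqref{eq:trait_dynamics} points back toward $x^{*}$.

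With the hypotheses verified, Theorem \ref{thm:main}(i) applies on $[0,T^{*}_\varepsilon)$, giving $W_1(q_{1,\varepsilon}(\cdot,t),g_{1,\varepsilon}(\cdot,t))\leq K\varepsilon^{1-\delta}$, and in particular $|\bar f[q_{1,\varepsilon}](t)-f(\bar Z_\varepsilon(t))|\leq K'\varepsilon^{1-\delta}$ via a Lipschitz estimate on $f$ against the Wasserstein distance (using the exponential tail from (A5) to handle the unboundedness of $f$). Next I plug this into \eqref{eqn:IP_population_size}, rewriting it as
\[
\varepsilon^{2}\dot{\rho}_{1,\varepsilon}=G(\bar Z_\varepsilon,\rho_{1,\varepsilon})\rho_{1,\varepsilon}+E_\varepsilon(t)\rho_{1,\varepsilon},
\]
where the error term $E_\varepsilon$ is $O(\varepsilon^{1-\delta})$ thanks to the Gaussian concentration. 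The condition $\partial_I G<-G^{*}$ from (A1) makes $I(\bar Z_\varepsilon(t))$ a uniformly attractive stable manifold for this fast ODE; standard slow-fast asymptotics (analogous to the parabolic-parabolic to parabolic-elliptic reduction cited in the introduction) give $|\rho_{1,\varepsilon}(t)-I(\bar Z_\varepsilon(t))|\leq K''\varepsilon^{1-\delta}$ for $t\geq \varepsilon^{\beta}$, strictly improving the bootstrap bounds. This improvement is the key that allows to conclude $T^{*}_{\varepsilon}=\infty$.

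Finally, passage to the limit $\varepsilon\to 0$ combines $\bar Z_\varepsilon\to \bar Z_0$ (from stability of the ODE \eqref{eqn:approx_mean_trait} and (H8)) with $\rho_{1,\varepsilon}(t)\to I(\bar Z_0(t))=\rho_1(t)$ and the vanishing variance of $q_{1,\varepsilon}$. The Gaussian ansatz concentrates into a Dirac mass at $\bar Z_0(t)$, so $n_{1,\varepsilon}=\rho_{1,\varepsilon} q_{1,\varepsilon}\to \rho_1(t)\,\delta_{x-\bar Z_0(t)}$ in $C((0,\infty);\mathcal{M}^{1}(\mathbb R))$, yielding the claim. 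The main obstacle is the coupling inside the bootstrap: the bound on $|\rho_{1,\varepsilon}-I(\bar Z_\varepsilon)|$ feeds into $\dot X_\varepsilon$ (hence (H7)), which feeds back into the Gaussian approximation of Theorem \ref{thm:main}(i), which in turn is needed to control $\bar f[q_{1,\varepsilon}]$ and close the loop on $\rho_{1,\varepsilon}$; making these dependencies quantitative and showing they do not blow up uniformly in $\varepsilon$ is the heart of the argument.
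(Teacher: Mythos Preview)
Your overall architecture matches the paper's proof closely: a continuation argument on which the hypotheses (H0)--(H9) are verified for the implicitly defined $m_\varepsilon$ and $X_\varepsilon$, so that Theorem~\ref{thm:main} can be invoked, after which the population ODE is shown to track the slow manifold $I(\bar Z_\varepsilon)$ and the bootstrap closes. Two differences are worth flagging.

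First, and most substantively, your control of $\dot X_\varepsilon$ via the naive estimate $\dot\rho_{1,\varepsilon}=O(\varepsilon^{-2}|\rho_{1,\varepsilon}-I(\bar Z_\varepsilon)|+\dots)$ is not sharp enough to recover (H7). On the bootstrap interval you only assume $|\rho_{1,\varepsilon}-I(\bar Z_\varepsilon)|\leq \varepsilon^{1-\delta}$, which yields $|\dot\rho_{1,\varepsilon}|=O(\varepsilon^{-1-\delta})$ and hence $\int_0^t|\dot X_\varepsilon(s)|e^{-\eta(t-s)/\varepsilon^2}\,ds=O(\varepsilon^{1-\delta})$ rather than the required $O(\varepsilon)$. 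Inspection of the proof of Proposition~\ref{thm:MomentEstimates} shows that this extra factor $\varepsilon^{-\delta}$ breaks the contraction used to construct $H_{2,\varepsilon}$. The paper resolves this via a dedicated logistic-ODE lemma (Lemma~\ref{lma:KPPBound}): writing \eqref{eqn:IP_population_size} around $M_{\varepsilon,1}$ (not $\bar Z_\varepsilon$) so that the error is $O(\varepsilon)$ from moment bounds alone, and then using the $O(\varepsilon)$ initial closeness in (A3) to obtain $|\dot\rho_{1,\varepsilon}|\leq C_1+C_2\varepsilon^{-1}e^{-ct/\varepsilon^2}$, hence $|\dot X_\varepsilon|\leq K_5\varepsilon^{-1}$. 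This is the key quantitative refinement missing from your sketch; the same lemma also replaces (H2), which the paper does not attempt to verify directly.

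Second, you propose to control $|\bar f[q_{1,\varepsilon}]-f(\bar Z_\varepsilon)|$ through the $W_1$ bound of Theorem~\ref{thm:main}(i) plus exponential tails. The paper instead Taylor-expands $f$ around $M_{\varepsilon,1}$ and controls the remainder directly with the moment estimates \eqref{eqn:M2kclose}--\eqref{eqn:M1close}. This is both simpler and avoids the circularity of invoking Theorem~\ref{thm:main}(i) before (H7) has been established; it also supplies the bound on $\frac{d}{dt}\bar f[q_{1,\varepsilon}]$ needed when differentiating $X_\varepsilon$. Apart from these two points, your plan and the paper's coincide.
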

This theorem shows that the differential equation satisfied by the limiting mean trait, the canonical equation of adaptive dynamics, may be expressed as an autonomous equation in $\bar{Z}_0(t)$. The shape of the mortality rate now takes into account the eco-evolutionary dynamics of the system and provides an effective "fitness landscape" along which the mean trait $\bar{Z}_0(t)$ evolves.

In Section \ref{sec: numerics} we provide concrete examples and perform numerical simulations to demonstrate the main analytical results.

\subsection*{The main idea of the proofs: a moment-based method} The proofs of Theorems \ref{thm:main} and \ref{thm:IP_main} are presented in Sections \ref{sec: proof of 1st theorem} and \ref{sec: proof of 2nd thm} respectively. We adopt and generalize the moment-based method developed in \cite{GHM}. The moment-based method transforms an integro-differential equation (or a partial differential equation) into a set of ordinary differential equations for the moments. The key challenge is that these equations are not closed, that is, the equation for a moment often depends on higher moments (in other words, one obtains an infinite set of equations). In most situations where there is no analytical relations between the moments, one simply truncates the systems by setting all sufficiently high moments to zero to obtain an approximate finite system.  It turns out that the moment-based method is suitable for analyzing the asymptotic limits of the infinitesimal equations \eqref{eqn:main} and \eqref{eqn:evolving_prey_simplified} since one can explicitly quantify their higher moments. This is the key technical step in the proofs of the theorems, which will be carried out in the next section. We also refer the reader to \cite{hadeler2004langevin,villa2024reducing} for some interesting applications of the moment-based method for similar models in mathematical biology.

\section{Moment estimates and proof of \cref{thm:main}}
\label{sec: proof of 1st theorem}
In this section, we will prove Theorem \ref{thm:main} which quantitatively establishes the asymptotic analysis for the trait distribution and the total population size of the single-species model \eqref{eqn:main}. As mentioned earlier, we will adopt and generalize the approach in \cite{GHM} to the setting of time-dependent mortality and birth rates of this paper. 

The following estimates of the central moments defined in \eqref{eq: central moments} will play a crucial role in the subsequent analysis.
\begin{proposition}
\label{thm:MomentEstimates}
Assume (H1)--(H9) and $\delta \in (0,1).$   For any $\varepsilon$ sufficiently small,  there exist large enough positive constants $K_0,K_1$ and $K_2$,  depending on the parameters $C_1,r,L$, such that:
\begin{align}
 \label{eqn:M2kclose} M^{c}_{\varepsilon,2k_0}(t) &  \leq  K_{2} \varepsilon^{2k_0}H_{2,\varepsilon}(t), \\
\label{eqn:M2close} 
 \left | M_{\varepsilon,2}^{c}(t) -  \varepsilon^2  \right| & \leq  K_1\varepsilon^2 H_{1,\varepsilon}(t), \\
\label{eqn:M1close} \left| M_{\varepsilon,1}(t) - \bar{Z}_{\varepsilon}(t) \right|  & \leq  K_0 \varepsilon^{1-{\delta}}. 
\end{align}
for any $t \in \mathbb R^+$,  where $H_{2,\varepsilon}(t)\leq{}2$, and eventually $H_{1,\varepsilon}(t)\leq{}2\varepsilon^{1-\delta}$ if $t$ is large enough.
\end{proposition}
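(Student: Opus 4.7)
The plan is to adapt the moment-based strategy of \cite{GHM} to the time-dependent setting by deriving an ODE hierarchy for the central moments $M_{\varepsilon,k}^c$ from \eqref{eqn:distribution} and then bootstrapping in the order $2k_0 \Rightarrow 2 \Rightarrow 1$: first control the high moment, use that to pin the variance to $\varepsilon^2$, and finally compare the mean to $\bar Z_\varepsilon$. The central computation is the explicit action of $\widetilde T_\varepsilon$: since $\widetilde T_\varepsilon[q_\varepsilon]$ is the law of $(Y+Y')/2 + (\varepsilon/\sqrt 2)N$ with $Y,Y' \sim q_\varepsilon(\cdot,t)$ iid and $N \sim \mathcal N(0,1)$ independent, one has
\[
\int_{\mathbb R}(x-M_{\varepsilon,1})^2\widetilde T_\varepsilon[q_\varepsilon]\,dx = \tfrac{1}{2} M_{\varepsilon,2}^c + \tfrac{\varepsilon^2}{2},
\qquad
\int_{\mathbb R}(x-M_{\varepsilon,1})^{2k_0}\widetilde T_\varepsilon[q_\varepsilon]\,dx = \tfrac{2}{4^{k_0}} M_{\varepsilon,2k_0}^c + (\text{lower}),
\]
and the $4^{-k_0}$ factor is precisely what produces the dissipation gap $\eta$ postulated in (H4).

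For \eqref{eqn:M2kclose}, I would multiply \eqref{eqn:distribution} by $(x-M_{\varepsilon,1})^{2k_0}$, Taylor-expand $\tilde m_\varepsilon(x,t) = m_\varepsilon(x - X_\varepsilon(t),t)$ about $M_{\varepsilon,1} - X_\varepsilon(t)$, and use (H3) together with the exponential tail of (H4) to absorb the polynomial-growth remainders, obtaining $\varepsilon^2 \tfrac{d}{dt} M_{\varepsilon,2k_0}^c \le -\eta M_{\varepsilon,2k_0}^c + O(\varepsilon^{2k_0})$; the stated bound with $H_{2,\varepsilon}\le 2$ then follows by Gronwall. For \eqref{eqn:M2close}, the explicit second-moment formula above combined with a second-order expansion of $\tilde m_\varepsilon$, with $M_{\varepsilon,3}^{|c|}$ and $M_{\varepsilon,4}^c$ controlled by Step 1, yields $\varepsilon^2 \tfrac{d}{dt}(M_{\varepsilon,2}^c - \varepsilon^2) \le -\tfrac{r_L}{2}(M_{\varepsilon,2}^c - \varepsilon^2) + O(\varepsilon^3)$, giving $H_{1,\varepsilon}(t)\le 2\varepsilon^{1-\delta}$ after an initial $O(\varepsilon^2\log(1/\varepsilon))$ layer. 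For \eqref{eqn:M1close}, the identity $\varepsilon^2 \dot M_{\varepsilon,1} = -\int_{\mathbb R}(x-M_{\varepsilon,1})\tilde m_\varepsilon q_\varepsilon\,dx$ and a third-order expansion combined with Step 2 produce $\varepsilon^2 \dot M_{\varepsilon,1} = -M_{\varepsilon,2}^c\,\partial_x m_\varepsilon(M_{\varepsilon,1} - X_\varepsilon(t), t) + O(\varepsilon^3)$; subtracting \eqref{eqn:approx_mean_trait} and exploiting the local strong convexity from (H1) (applicable because (H9) keeps $M_{\varepsilon,1}-X_\varepsilon(t)$ in $(-L,L)$) then delivers the bound.

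The hard part will be Step 3 in the presence of the moving optimum $X_\varepsilon(t)$: the variational equation for $D_\varepsilon := M_{\varepsilon,1} - \bar Z_\varepsilon$ inherits source terms involving $\dot X_\varepsilon$, which need not be pointwise small in $\varepsilon$ and can in fact oscillate rapidly, as the remark following (H8) emphasizes. The bound has to be extracted in convolution form against the contraction semigroup generated by the linearization: the integral condition $\int_0^t |\dot X_\varepsilon(s)|\,e^{-\eta(t-s)/\varepsilon^2}\,ds < \varepsilon L_X$ of (H7) is tailored exactly to match the contraction rate $\eta/\varepsilon^2$ coming from local strong convexity of $m_\varepsilon$, producing the $O(\varepsilon^{1-\delta})$ right-hand side of \eqref{eqn:M1close}. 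A secondary technical point is that the three estimates must be bootstrapped consistently, keeping $M_{\varepsilon,1}(t)$ inside the Taylor neighborhood $(-L+\gamma, L-\gamma)$ on the full half-line — which is precisely the content of the a priori assumption (H9).
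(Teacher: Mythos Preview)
Your overall architecture (derive moment ODEs from \eqref{eqn:distribution}, exploit the $4^{-k_0}$ contraction of $\widetilde T_\varepsilon$, close by Gronwall) matches the paper, but you have misplaced the genuine time-dependent difficulty. The source term involving $\dot X_\varepsilon$ does \emph{not} appear in the equation for $D_\varepsilon = M_{\varepsilon,1}-\bar Z_\varepsilon$: since $\bar Z_\varepsilon$ solves \eqref{eqn:approx_mean_trait}, the shift $X_\varepsilon(t)$ cancels exactly when you subtract, and the first-moment comparison is driven only by the convexity rate $A_0$ from (H1) (not $\eta/\varepsilon^2$, which is unrelated to convexity). Where $\dot X_\varepsilon$ actually enters is the $2k_0$-moment equation: working in the shifted frame $Q_\varepsilon(x,t)=q_\varepsilon(x+X_\varepsilon(t),t)$, integrating $(x-\widetilde M_{\varepsilon,1})^{2k_0}\partial_x Q_\varepsilon$ by parts produces the extra term $\varepsilon^2\, 2k_0\,\dot X_\varepsilon(t)\,M_{\varepsilon,2k_0-1}^{c}$, so your differential inequality for $M_{\varepsilon,2k_0}^c$ is not simply $\varepsilon^2 \dot M_{\varepsilon,2k_0}^c \le -\eta M_{\varepsilon,2k_0}^c + O(\varepsilon^{2k_0})$. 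It is precisely here, after applying Gronwall, that the convolution hypothesis (H7) is consumed; the paper handles the resulting nonlinear integral inequality for $H_{2,\varepsilon}$ by a Banach fixed-point construction, which is why the envelope functions $H_{i,\varepsilon}$ appear in the statement rather than plain constants.

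A second issue is your proposed sequential order $2k_0\Rightarrow 2\Rightarrow 1$. The $2k_0$-moment inequality needs both $|\widetilde M_{\varepsilon,1}|<L$ (to evaluate the Taylor remainders via (H3) and to activate the dissipation gap $\eta$) and control of the intermediate moments, which in turn comes from H\"older interpolation between $M_{\varepsilon,2}^c$ and $M_{\varepsilon,2k_0}^c$. So the three estimates are genuinely circular, and the paper closes them simultaneously by a continuation argument: define $T_\varepsilon$ as the maximal time on which \eqref{eqn:M2kclose}--\eqref{eqn:M1close} hold, show all three are strict at $T_\varepsilon$, and conclude $T_\varepsilon=\infty$. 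Your final paragraph gestures at this, but the body of the proposal should be reorganised around the continuation, and the role of (H7) relocated from Step~3 to Step~1.
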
 
The proof of this proposition requires some preliminary calculations. We will need to work with a shifted central moment and distribution. To this end, we define  
\[
Q_\varepsilon(x,t)=q_\varepsilon(x+X_\varepsilon(t),t)~\text{and}~\widetilde{M}_{\varepsilon,1}=M_{\varepsilon,1}-X(t).
\]
We obtain from \eqref{eqn:distribution} the following equation for $Q_\varepsilon$:
\begin{equation} \label{eqn:distribution_shift}
\varepsilon^2 \partial_t Q_\varepsilon-\varepsilon^2\partial_{x}Q_\varepsilon{}\dot{X}_\varepsilon(t) = r_\varepsilon(t)\left(\widetilde T_{\varepsilon}[Q_\varepsilon] - Q_\varepsilon \right) 
-  \left( m_\varepsilon(x,t)  - \int_{\mathbb R} m_\varepsilon(x,t)Q_\varepsilon dx \right) Q_\varepsilon.
\end{equation}
 Note that $\widetilde{M}_{\varepsilon,1}=\int_{}xQ_\varepsilon(x,t)dx$ and that \begin{align*}
     \int_{\mathbb{R}}(x-\widetilde{M}_{\varepsilon,1})^kQ_\varepsilon{}dx&=\int_{\mathbb{R}}(x-{M}_{\varepsilon,1})^kq_\varepsilon{}dx= M_{\varepsilon,k}^c.
 \end{align*}
Next, we recall the notation for the remainder from \cite{GHM}; for a $C^2(\mathbb{R})$ function $f$ the remainder in the first order Taylor-Lagrange is given by
    \[r^{f}[X](x)=\int_{0}^{1}(1-\sigma)f''(X+\sigma(x-X)),\]
    and we may write
    \[f(x)=f(X)+(x-X)f'(X)+(x-X)^2r^f[X](x).\]
Since we will compute expansions in the $x$ component of functions which depend on $x$ and $t$, given any function $f:\mathbb{R}^2\to{}\mathbb{R}$ we define $f_t(x)=f(x,t)$; i.e. $f_t$ are a family of functions each of which depends only on $x$.  We note here that we may write 
$m_t(x)=m_\varepsilon(x,t)$ as
\[m_t(x)=m_t(\widetilde{M}_{\varepsilon,1})+(x-\widetilde{M}_{\varepsilon,1})m_t'(\widetilde{M}_{\varepsilon,1})+(x-\widetilde{M}_{\varepsilon,1})^2r^{m_t}[\widetilde{M}_{\varepsilon,1}](x),\]
and it follows from (H3) and the fact that $p>1$ that $|r^{m_t}[\widetilde{M}_{\varepsilon,1}](x)|\leq{}A_m(1+|\widetilde{M}_{\varepsilon,1}|^p+|\widetilde{M}_{\varepsilon,1}-x|^p)$.
\subsection{First Shifted Moment}  
We begin by multiplying \eqref{eqn:distribution_shift} by $x$ and integrating. We arrive at
\[
\varepsilon^2 (\dot{\widetilde{M}}_{\varepsilon,1}+\dot{X}_\varepsilon(t))= -   \left(  \int_{\mathbb R} m_\varepsilon(x,t)xQ_\varepsilon(x,t){\rm d}x  - I_{\varepsilon,m} \widetilde{M}_{\varepsilon,1} \right),
\]
    where $I_{\varepsilon,m}(t)=\int_{}m_\varepsilon(x,t)q_\varepsilon(x,t)dx$. This can be expanded as 
    \begin{equation}\label{eqn:Iem}
        I_{\varepsilon,m}(t)=m_t(\widetilde{M}_{\varepsilon,1})+\int_{0}^{1}\int_\mathbb{R}(1-\sigma)m_t''(\widetilde{M}_{\varepsilon,1}+(x-\widetilde{M}_{\varepsilon,1}\sigma))d\sigma{dx}.
    \end{equation}
    A straightforward calculation, using (H3), then shows
    \[|I_{\varepsilon,m}-m_{t}(\widetilde{M}_{\varepsilon,1})|<CA_m\left((1+|\widetilde{M}_{\varepsilon,1}|^p)M_{\varepsilon,2}^c+\widetilde{M}_{\varepsilon,2+p}^{|c|}\right)\]
We now let $\psi(x,t)=xm_\varepsilon(x,t)$ and, for the purposes of the Taylor expansions, we let $\psi_{t}(x)=\psi(x,t$) and $m_t(x)=m_\varepsilon(x,t)$. From this we can write the first term as 
\begin{align*}
\int_{\mathbb{R}}m_t(x)xQ_\varepsilon(x,t)dx=m_t(\widetilde{M}_{\varepsilon,1})\widetilde{M}_{\varepsilon,1}+\int_\mathbb{R}(x-M_{\varepsilon,1})^2r^\psi[\widetilde{M}_{\varepsilon,1}](x)q(x,t)dx.
\end{align*}
Using the fact that $\psi_t''(x)=xm_t''(x)+2m_t'(x)$, and the integral form of the remainder, we arrive at
    \[\varepsilon^2(\widetilde{M}_{\varepsilon,1}+\dot{X}_\varepsilon(t))+m_{t}'(\widetilde{M}_{\varepsilon,1})M_{\varepsilon,2}^c=-F_1.\]
 
    \begin{equation}\label{eqn:F1bound}
        |F_1|\leq{}CA_m\left((1+|\widetilde{M}_{\varepsilon,1}|^p){M}_{\varepsilon,3}^{|c|}+{M}_{\varepsilon,3+p}^{|c|}\right).
    \end{equation}

      \subsection{Second Moment}  
    Computations for the second moment is identical to that in \cite{GHM} apart from an extra term but for completeness we provide the full calculation here. We firstly compute
    \begin{align*}
        \varepsilon^2\dot{M}_{\varepsilon,2}^c=&-2\varepsilon^2\dot{\widetilde{M}}_{\varepsilon,2}\int_{\mathbb{R}}(x-\widetilde{M}_{\varepsilon,1})Q_\varepsilon(x,t)dx+\varepsilon^2\dot{X}_\varepsilon\int_{\mathbb{R}}(x-\widetilde{M}_{\varepsilon,1})^2\partial_{x}Q_\varepsilon(x,t)\\&+\varepsilon^2\int_{\mathbb{R}}(x-\widetilde{M}_{\varepsilon,1})^2\partial_{t}q_\varepsilon(x+X_\varepsilon,t).
    \end{align*}
    The first term vanishes, by the definition of $\widetilde{M}_{\varepsilon,1}$ as does the the additional term $-\varepsilon^2\dot{X}_\varepsilon(t)\int_{\mathbb{R}}(x-\widetilde{M}_{\varepsilon,1})^2\partial_{x}Q_\varepsilon{}dx$ which results from the time-dependent optimum. To thee this, observe that, by an application of the comparison principle to \eqref{eqn:distribution_shift}, making use of the initial bound on $q_\varepsilon$ from (H4), the function $\partial_{x}Q_\varepsilon(x,t)$ decays exponentially in $|x|$ for all $t>0$. Then integration by parts and the definition of $\widetilde{M}_{\varepsilon,1}$ gives the conclusion.
    
    We focus on the last term now. Expanding it using \eqref{eqn:distribution} yields
    \begin{align*}
        \varepsilon^2\int_{\mathbb{R}}(x-\widetilde{M}_{\varepsilon,1})^2\partial_{t}q_\varepsilon(x+X_\varepsilon,t)=&r_\varepsilon(t)\int_\mathbb{R}(x-M_{\varepsilon,1})^2(\tilde{T}[q_\varepsilon]-q_\varepsilon)\\&
        -\left(\int_{\mathbb{R}}\tilde{m}_\varepsilon(x,t)(x-M_{\varepsilon,1})^2q_\varepsilon(x,t)dx-I_{\varepsilon,m}(t)\int_{\mathbb{R}}(x-M_{\varepsilon,1})^2q_\varepsilon(x,t)dx\right).
    \end{align*}

The term $\tilde{T}_\varepsilon[q_\varepsilon]$ satisfies the same algebra as in \cite{GHM}. Namely, we have
\begin{align*}
    \int_{\mathbb{R}}(x&-M_{\varepsilon,1})^2\tilde{T}[q_\varepsilon](x)dx\\
    &=\int_{\mathbb{R}}\int_{\mathbb{R}}\int_{\mathbb{R}}\left(\left(x-\frac{y-y'}{2}\right)+\left(\frac{y+y'}{2}-M_{\varepsilon,1}\right)\right)^2\Gamma_\varepsilon\left(x-\frac{y+y'}{2}\right)q_\varepsilon(y,t)q_\varepsilon(y',t)dydy'dx\\
    &=\frac{\varepsilon^2}{2}+\int_{\mathbb{R}}\int_{\mathbb{R}}\left(\frac{y-M_{\varepsilon,1}}{2}+\frac{y'-M_{\varepsilon,1}}{2}\right)^2q_\varepsilon(y,t)q_\varepsilon(y',t)dydy'\\
    &=\frac{\varepsilon^2}{2}+\frac{M_{\varepsilon,2}^c}{2}.
\end{align*}
    Using this identity, we obtain:
\begin{align*}
    \varepsilon^2\dot{M}_{\varepsilon,2}^c=r_\varepsilon(t)\left(\frac{\varepsilon^2}{2}-\frac{M_{\varepsilon,2}^c}{2}\right)-\left(\int_{\mathbb{R}}{m}_\varepsilon(x,t)(x-\widetilde{M}_{\varepsilon,1})^2Q_\varepsilon(x,t)dx-I_{\varepsilon,m}M_{\varepsilon,2}^c\right).
\end{align*}
We may expand the first term in the brackets as
\[\int_{\mathbb{R}}m_\varepsilon(x,t)(x-\widetilde{M}_{\varepsilon,1})^2Q_\varepsilon(x,t)dx=m_t(\widetilde{M}_{\varepsilon,1})M_{\varepsilon,2}^c+m_t'(\widetilde{M}_{\varepsilon,1})M_{\varepsilon,3}^c+\int_\mathbb{R}(x-\widetilde{M}_{\varepsilon,1})^2r^m[\widetilde{M}_{\varepsilon,1}](x)Q_\varepsilon(x,t)dx.\]
Combining the above expression with the expansion  for $I_{\varepsilon,m}$ \eqref{eqn:Iem} shows that
\begin{align*}
    \int_{\mathbb{R}}{m}_\varepsilon(x,t)&(x-\widetilde{M}_{\varepsilon,1})^2Q_\varepsilon(x,t)dx-I_{\varepsilon,m}M_{\varepsilon,2}^c=m'_t(\widetilde{M}_{\varepsilon,2})M_{\varepsilon,3}^c\\&+\int_0^1\int_\mathbb{R}(1-\sigma)m_t''(\widetilde{M}_{\varepsilon,1}+(x-\widetilde{M}_{\varepsilon,1})\sigma)((x-\widetilde{M}_{\varepsilon,1})^4-(x-\widetilde{M}_{\varepsilon,1})^2M_{\varepsilon,2^c})d\sigma{}Q_\varepsilon(x,t)dx.
\end{align*}

Denoting the RHS of the above as $F_2$  we obtain 
\begin{equation}\label{eqn:second_shift_moment}\varepsilon^2\dot{{M}}_{\varepsilon,2}^c+\frac{r_\varepsilon(t)}{2}{M}_{\varepsilon,2}^c=r_\varepsilon(t)\frac{\varepsilon^2}{2}+F_2,
\end{equation}
where we may use (H3) to bound $F_2$ as follows
\begin{equation*}
  F_2\leq{}CA_m(1+|\widetilde{M}_{\varepsilon,1}|+|\widetilde{M}_{\varepsilon,1}|^p+|\widetilde{M}_{\varepsilon,1}|^{p+1})(|M_{\varepsilon,2}|^2+M_{\varepsilon,3}^{|c|}+M_{\varepsilon,4}^{c}+M_{\varepsilon,4+p}^{c}+M_{\varepsilon,2}M_{\varepsilon,2+p}).  
\end{equation*}
Our expression of $F_2$ is slightly different to the expression in \cite{GHM}. We prefer to express it as a product of a sum of terms involving only the first shifting moment, and a sum of terms involving higher order moments. The evolution equation \eqref{eqn:second_shift_moment} is identical if we replace $r$ by $r_\varepsilon(t)$.
\subsection{Higher Order Moments}  
We compute 
\begin{align*}
\varepsilon^2{}\dot{M}_{\varepsilon,2k}=&\int_{\mathbb{R}}(x-{M}_{\varepsilon,1})^{2k}\varepsilon^2\partial_t{}q_{\varepsilon}(x,t)-2k\varepsilon^2\dot{\widetilde{M}}_{\varepsilon,1}\int_\mathbb{R}(x-\widetilde{M}_{\varepsilon,1})^{2k-1}Q_\varepsilon(x,t)dx\\&-\varepsilon^2\dot{X}_\varepsilon(t)\int_{\mathbb{R}}(x-M_{\varepsilon,1})^{2k}\partial_{x}Q_\varepsilon(x,t).
\end{align*}
Similarly to the second moment, compared to \cite{GHM} the evolution equation for the higher order even moment  $M_{\varepsilon,2k}^c$ has an additional term on the right hand side.  We can write this term as
\[\varepsilon^2{}\dot{X}_\varepsilon(t)\int_{\mathbb{R}}(x-\widetilde{M}_{\varepsilon,1})^{2k}\partial_{x}Q_\varepsilon{}dx=\varepsilon^2{2k}\dot{X}_\varepsilon(t)M_{\varepsilon,2k-1}.\] 
We now control the first term. We write
\begin{align*}
    \int_{\mathbb{R}}(x-{M}_{\varepsilon,1})^{2k}\varepsilon^2\partial_t{}q_{\varepsilon}(x,t)=&r_\varepsilon(t)\left(\int_{\mathbb{R}}(x-M_{\varepsilon,1})^{2k}\tilde{T}[q_\varepsilon]-M_{\varepsilon,2k}\right)\\
    &-\int_{\mathbb{R}}\tilde{m}_\varepsilon(x,t)(x-M_{\varepsilon,1})^{2k}q_\varepsilon(x,t)dx+I_{\varepsilon,m}M_{\varepsilon,2k}\\
    &+2k(F_1+\partial_xm_\varepsilon(\widetilde{M}_{\varepsilon,1},t)M_{\varepsilon,2}^c)M_{\varepsilon,2k-1}^{c}.
\end{align*}

We can again make use of the algebra satisfied by $\tilde{T}$. We write
\begin{align*}
    \int_{\mathbb{R}}(x-M_{\varepsilon,1})^{2k}\tilde{T}[q_\varepsilon]&=\int_\mathbb{R}\int_\mathbb{R}\int_\mathbb{R}\left(\left(x-\frac{y+y'}{2}\right)+\left(\frac{y+y'}{2}-M_{\varepsilon,1}\right)\right)^{2k}\Gamma\left(x-\frac{y+y'}{2}\right)q_\varepsilon(y,t)q_\varepsilon(y',t)dydy'dx.
\end{align*}
We then expand the polynomial $\left(\left(x-\frac{y+y'}{2}\right)+\left(\frac{y+y'}{2}-M_{\varepsilon,1}\right)\right)^{2k}$, and integrate out $\Gamma$. We also use the fact that the $l$th moment of $\Gamma_\varepsilon$ is of the form $\sigma_l\varepsilon^{2l}$ to eventually obtain
\begin{align*}
    \int_{\mathbb{R}}(x-M_{\varepsilon,1})^{2k}\tilde{T}[q_\varepsilon]=\frac{2}{4^k}M_{\varepsilon,2k}^c+\sum_{l=0}^{k-1}\sum_{j=0}^{2l}\sigma_{k-l}\varepsilon^{2(k-l)}\frac{1}{4^l}{2k\choose2l}{2l\choose{j}}M_{\varepsilon,2l-j}^cM_{\varepsilon,j}^c+\sum_{j=2}^{2k-2}\frac{1}{4^k}{2k\choose{j}}M_{\varepsilon,2k-j}M_{\varepsilon,j}^c.
\end{align*}
From the above, we get
\begin{equation}\label{eqn:HigherOrderMoments}
    \varepsilon^2{}\dot{M}_{\varepsilon,2k}^c+\left[r_\varepsilon(t)\left(1-\frac{1}{4^k}\right)-m_\varepsilon(\widetilde{M}_{\varepsilon,1},t)\right]M_{\varepsilon,2k}^c\leq{}F_{2k}+\varepsilon^2{2k}|\dot{X}_\varepsilon(t)M_{\varepsilon,2k-1}^{c}|.
\end{equation}
where
\begin{align*}
    F_{2k}\leq{}&2k(|F_1|+CA_m(|\widetilde{M}_{\varepsilon,1}|+|\widetilde{M}_{\varepsilon,1}|^{p+1})M_{\varepsilon,2}^c)|M_{\varepsilon,2k-1}^c+CA_mM_{\varepsilon,2k}^c\left((1+|\widetilde{M}_{\varepsilon,1}|^p)M_{\varepsilon,2}^c+M_{\varepsilon,2+p}^{|c|}\right)\\
    &+\sum_{l=0}^{k-1}\sum_{j=0}^{2l}\sigma_{k-l}\varepsilon^{2(k-l)}\frac{1}{4^l}{2k\choose2l}{2l\choose{j}}M_{\varepsilon,2l-j}^cM_{\varepsilon,j}^c+\sum_{j=2}^{2k-2}\frac{1}{4^k}{2k\choose{j}}M_{\varepsilon,2k-j}M_{\varepsilon,j}^c.
\end{align*}

\subsection{Proof of \cref{thm:MomentEstimates}}
Having completed preliminary calculations for all moment equations, we proceed similarly to \cite{GHM} and  define \[T_\varepsilon=\sup\{t\in[0,\infty):\eqref{eqn:M2kclose}-\eqref{eqn:M1close} \text{ hold for } t\in[0,t]\},\]
and argue that we can choose $K_0$,  $K_1$, $K_2$, $H_{1,\varepsilon}$ and $H_{2,\varepsilon}$ such that $T_\varepsilon=\infty$, similarly to \cite{GHM}. We show such a choice is possible for $K_0, K_1, K_2$ in the following lemma.
 \begin{lemma}
 \label{prop:InfiniteT_eps} 
 Assume (H1)-(H9). There are constants $K_0$,$K_1$, and $K_2$, and functions $H_{i,\varepsilon}(t)$ such that for a given $\delta\in(0,1)$ and $\varepsilon$ depending on  $\delta$ we have
 \begin{itemize}
     \item[(i)] $T_\varepsilon>0$
     \item[(ii)] if $T_\varepsilon\in(0,\infty)$ we have\begin{align}
 \label{eqn:M2kcloseT} M^{c}_{\varepsilon,2k_0}(t) &  <  K_{2} \varepsilon^{2k_0}H_{2,\varepsilon}(t), \\
\label{eqn:M2closeT} 
 \left | M_{\varepsilon,2}^{c}(t) -  \varepsilon^2  \right| & <  K_1\varepsilon^2 H_{1,\varepsilon}(t), \\
\label{eqn:M1closeT} \left| M_{\varepsilon,1}(t) - \bar{Z}_{\varepsilon}(t) \right|  & <  K_0 \varepsilon^{1-{\delta}},
\end{align}
 \end{itemize}
 where $H_{2,\varepsilon}(t)\leq{}3$ for $t\in[0,T_\varepsilon]$, and $H_{1,\varepsilon}(t)\leq{}3\varepsilon^{1-\delta}$ if $t$ is large enough.
     
 \end{lemma}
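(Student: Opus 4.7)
The proof is a bootstrap/continuation argument built on the three moment equations derived in Sections 3.1--3.3. I would define
\[
H_{2,\varepsilon}(t) = e^{-\eta t/\varepsilon^2} + 1, \qquad H_{1,\varepsilon}(t) = e^{-r_L t/(2\varepsilon^2)} + C_{*}\varepsilon^{1-\delta},
\]
for a constant $C_{*}$ to be fixed, and then pick $K_2, K_1, K_0$ in that order, each large enough that the strict versions of \eqref{eqn:M2kcloseT}--\eqref{eqn:M1closeT} can be reclosed on $[0, T_\varepsilon]$ from their nonstrict counterparts. Part (i) then follows because at $t=0$ assumption (H4) gives $M_{\varepsilon,2k_0}^c(0) \leq C_1 \varepsilon^{2k_0}$ (hence by Jensen $M_{\varepsilon,2}^c(0) \leq C_1^{1/k_0}\varepsilon^2$) and (H5) together with the initial condition $\bar Z_\varepsilon(0)=M_{\varepsilon,1}(0)$ gives $E(0):=M_{\varepsilon,1}(0)-\bar Z_\varepsilon(0)=0$. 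For $K_2, K_1, K_0$ large these initial values sit strictly below their respective bounds, and continuity of the moments in $t$ extends the strict inequalities to a neighbourhood of $0$.

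The core of (ii) is a top-down cascade. On $[0, T_\varepsilon]$ the scalar ODE inequality \eqref{eqn:HigherOrderMoments} with $k=k_0$ has dissipation rate at least $\eta/\varepsilon^2$ by (H4). The forcing $F_{2k_0}$ is closed using the inductive bound $M_{\varepsilon,2k_0}^c \leq K_2 \varepsilon^{2k_0} H_{2,\varepsilon}$, the yet-to-be-established $M_{\varepsilon,2}^c \leq 2\varepsilon^2$, and Jensen interpolation to control the intermediate even moments (and odd absolute moments analogously); the prefactors in $|\widetilde M_{\varepsilon,1}|$ are bounded by (H9). The additional term $2k_0 \varepsilon^2|\dot X_\varepsilon|\, M_{\varepsilon,2k_0-1}^{|c|}$, once convolved against the integrating factor $e^{-\eta(t-s)/\varepsilon^2}$, is controlled \emph{exactly} by the integral appearing in (H7), contributing at most $C \varepsilon L_X \varepsilon^{2k_0-1}$. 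Picking $K_2$ large then yields the strict form of \eqref{eqn:M2kcloseT}.

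With the highest moment under control, equation \eqref{eqn:second_shift_moment} rewritten for $M_{\varepsilon,2}^c - \varepsilon^2$ is a linear ODE with dissipation rate $r_\varepsilon(t)/(2\varepsilon^2) \geq r_L/(2\varepsilon^2)$ by (H2) and source $F_2/\varepsilon^2$; all the odd and higher even moments appearing in $F_2$ are of order $\varepsilon^3$ or better by the same Jensen interpolation against the newly established bound on $M_{\varepsilon,2k_0}^c$, giving $|F_2|/\varepsilon^2 = O(\varepsilon^{1-\delta})$, and the integrating factor argument delivers the strict form of \eqref{eqn:M2closeT} with $H_{1,\varepsilon}$ as above. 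Finally, subtracting \eqref{eqn:approx_mean_trait} from the first-moment identity of Section 3.1 (divided by $\varepsilon^2$) gives
\[
\dot E = -\bigl[m_t'(\widetilde M_{\varepsilon,1}) - m_t'(\bar Z_\varepsilon - X_\varepsilon)\bigr] - m_t'(\widetilde M_{\varepsilon,1})\,\frac{M_{\varepsilon,2}^c - \varepsilon^2}{\varepsilon^2} - \frac{F_1}{\varepsilon^2}.
\]
The first bracket equals $m_t''(\xi)\,E$ for some $\xi$ between $\widetilde M_{\varepsilon,1}$ and $\bar Z_\varepsilon - X_\varepsilon$, both of which lie in $(-L, L)$ by (H9) and the strict bound on $E$, so (H1) gives contraction $\dot E \leq -A_0 E + O(\varepsilon^{1-\delta})$; here the $O(\varepsilon^{1-\delta})$ source uses the bound on $M_{\varepsilon,2}^c - \varepsilon^2$ and \eqref{eqn:F1bound} combined with the moment bounds from the previous two steps. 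Since $E(0)=0$, Gronwall yields $|E(t)| \leq C \varepsilon^{1-\delta}$, strictly below $K_0 \varepsilon^{1-\delta}$ for $K_0$ large.

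The main obstacle is the interplay between the new $\dot X_\varepsilon$-driven drift---which may be as singular as $\varepsilon^{-1}$ in examples like $\cos(t/\varepsilon)$---and the dissipative structure of \eqref{eqn:HigherOrderMoments}; this is precisely why (H7) encodes the integrated-against-exponential control that matches the natural Gronwall integrating factor $e^{-\eta(t-s)/\varepsilon^2}$. A secondary subtlety is that $H_{1,\varepsilon}$ has to absorb a non-small initial layer coming from $M_{\varepsilon,2}^c(0)$ possibly being far from $\varepsilon^2$, but this layer decays on the fast timescale $\varepsilon^2/r_L$ and so contributes only $O(\varepsilon^2)$ to the convolution driving $E$, well below the target $\varepsilon^{1-\delta}$.
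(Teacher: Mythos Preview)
Your bootstrap argument is the same overall strategy as the paper's: assume the nonstrict bounds \eqref{eqn:M2kclose}--\eqref{eqn:M1close} on $[0,T_\varepsilon]$, reclose them strictly via the moment ODEs of Sections~3.1--3.3 together with H\"older interpolation for the intermediate moments, and absorb the new $\dot X_\varepsilon$-driven term by matching it against the integrating factor $e^{-\eta(t-s)/\varepsilon^2}$ exactly as (H7) is designed for. The one substantive difference is your \emph{explicit} choice $H_{2,\varepsilon}(t)=1+e^{-\eta t/\varepsilon^2}$; the paper instead defines $H_{2,\varepsilon}$ implicitly as the fixed point of a nonlinear integral operator (proved to exist in a ball of radius~$3$ by Banach's contraction theorem), coupled to $H_{1,\varepsilon}$ through the side constraint $1+H_{1,\varepsilon}\le H_{2,\varepsilon}$ used in the interpolation step. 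Your explicit route is simpler and does close the loop once the smallness parameter coming from $K_2\gg K_1$ is taken so that its product with $4/\eta+2L_X$ is below~$1$; the paper's fixed-point construction is more systematic but not strictly necessary. Two minor slips to fix when you write this up: within the bootstrap the assumed bound gives $M_{\varepsilon,2}^c\le \varepsilon^2(1+K_1 H_{1,\varepsilon})$, not $\le 2\varepsilon^2$, so during the initial layer the second moment may be of size $(1+K_1)\varepsilon^2$---harmless, since only the $\varepsilon^2$ scaling enters the interpolation; and the line ``$\dot E\le -A_0 E+O(\varepsilon^{1-\delta})$'' should be stated for $|E|^2$ (as the paper does) or for $|E|$, since the sign of $m_t''(\xi)E$ depends on the sign of~$E$.
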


 \begin{proof}

 We first check that $\widetilde{M}_{\varepsilon,1}<L$ for $t\in[0,T_\varepsilon]$.   We have that
 $|\widetilde{M}_{\varepsilon,1}-Y_\varepsilon|\leq{}K_0\varepsilon^{1-\delta} \text{ on } t\in[0,T_\varepsilon],$
  which implies 
  \[
  |\widetilde{M}_{\varepsilon,1}|<|Y_\varepsilon|+K_0\varepsilon^{1-\delta}.
  \] 
From (H9), for sufficiently small $\varepsilon$, we have $|Y_\varepsilon|<L-\gamma$, and so $|\widetilde{M}_{\varepsilon,1}|<L$ for all $t\in[0,T_\varepsilon]$.

We may without loss of generality set $H_{1,\varepsilon}(0)=1$, and $H_{2,\varepsilon}=2$. For the first part, we choose $K_0$, $K_1$, and $K_2$ large enough with respect to the initial data. As in \cite{GHM}, this requires first choosing $K_1$ depending upon $C_1$ from the estimate on the initial data (H4), choosing $K_2$ depending on $K_1$ and $C_1$ and finally $K_0$ depending on $K_1$ and $L$.
 
 To show the second part, we will assume that $1+H_{1,\varepsilon}(t)\leq{}H_{2,\varepsilon}(t)$, which by the positivity of $H_{1,\varepsilon}(t)$ also requires that $H_{2,\varepsilon}(t)\geq{}1$. We  estimate the intermediate moments using the H\"{o}lder inequality,
\begin{align*}
M_{\varepsilon,l}^{|c|}&\leq{}M_{\varepsilon,2}^\frac{l-2}{2k_0-2}M_{\varepsilon,2k_0}^\frac{2k_0-l}{2k_0-2}\\
&\leq{}\varepsilon^{l}K_{2}^\frac{l-2}{2k_0-2}K_{1}^\frac{2k_0-l}{2k_0-2}H_{2,\varepsilon}(t)^\frac{l-2}{2k_0-2}(1+H_{1,\varepsilon}(t))^\frac{2k_0-l}{2k_0-2}\\
&\leq{}2\varepsilon^{l}K_{2}^\frac{l-2}{2k_0-2}K_{1}^\frac{2k_0-l}{2k_0-2}H_{2,\varepsilon}(t)^\frac{l-2}{2k_0-2}(1+H_{1,\varepsilon}(t))^\frac{2k_0-l}{2k_0-2}\\
&\leq{}\varepsilon^{l}K_{2}H_{2,\varepsilon}(t),
\end{align*}
where in the last line we assume $K_2$ is sufficiently larger than $K_1$.

 The computation of the second moment yields the equation 
\[\begin{cases}
    \dot{M}_{\varepsilon,2}+\frac{r_\varepsilon(t)}{2\varepsilon^2}M^{c}_{\varepsilon,2}=\frac{r_\varepsilon(t)}{2}+\frac{1}{\varepsilon^2}
F_2,\\
M_{\varepsilon,2}^{c}=\alpha_{\varepsilon,0}\varepsilon^2\end{cases},\]
whose solution may be written
\begin{align}\label{eqn:Meps2}
    M_{\varepsilon,2}(t)=&\varepsilon^2+\varepsilon^2(\alpha_{\varepsilon,0}-1)\exp\left(-\frac{1}{2\varepsilon^2}\int_{0}^{t}r_\varepsilon(s)ds\right)\nonumber\\
    &+\frac{1}{\varepsilon^2}\int_{0}^{t}F_2(s)\exp\left(-\frac{1}{2\varepsilon^2}\int_{s}^{t}r_\varepsilon(s)\right)ds.
\end{align}
We focus for now on the final term in the above equation. We recall $F_2$ may be controlled in terms of the moments as
\begin{align*}
  F_2&\leq{}CA_m(1+|\widetilde{M}_{\varepsilon,1}|+|\widetilde{M}_{\varepsilon,1}|^p+|\widetilde{M}_{\varepsilon,1}|^{p+1})(|M_{\varepsilon,2}|^2+M_{\varepsilon,3}^{|c|}+M_{\varepsilon,4}^{c}+M_{\varepsilon,4+p}^{c}+M_{\varepsilon,2}M_{\varepsilon,2})\\
  &\leq{}CA_m(1+L+L^p+L^{p+1})(K_2^2\varepsilon^4H_{2,\varepsilon}^2+K_2\varepsilon^3H_{2,\varepsilon}+K_2\varepsilon^4H_{2,\varepsilon}+K_2\varepsilon^{4+p}H_{2,\varepsilon}+K_2^2\varepsilon^{4+p}H_{2,\varepsilon}^2)\\
  &\leq{}\varepsilon^3C\tilde{C}_LK_2^2H_{2,\varepsilon}(t)^2
\end{align*}
where $\tilde{C}_L=A_m(1+L+L^p+L^{p+1})$, and we use the fact that $H_{2,\varepsilon}(t)\geq{}1$. We may now integrate to obtain
\begin{align*}
    \left\vert\frac{1}{\varepsilon^2}\int_{0}^{t}F_2(s)\exp\left(-\frac{1}{2\varepsilon^2}\int_{s}^{t}r_\varepsilon(s)\right)ds\right\vert&\leq{}    \frac{1}{\varepsilon^2}\int_{0}^{t}|F_2(s)|\exp\left(-\frac{r_L}{2\varepsilon^2}(t-s)\right)ds\\ &\leq{}C\tilde{C}_LK_2^2\varepsilon\int_{0}^{t}H_{2,\varepsilon}^2(s)\exp\left(-\frac{r_L}{2\varepsilon^2}(t-s)\right)\\
    &\leq{}\varepsilon^{1-\delta}\frac{K_1}{4}\int_{0}^t{}H_{2,\varepsilon}^2(s)\exp\left(-\frac{r_L}{2\varepsilon^2}(t-s)\right)ds,\\
    &\leq{}\varepsilon^{3-\delta}\frac{K_1}{4}\int_{0}^{\frac{r_L}{2\varepsilon 2}t}H_{2,\varepsilon}^2\left(t-\frac{2\varepsilon^2{z}}{r_L}\right)\exp\left(-z\right)dz,
\end{align*}
where the second last inequality is valid for small enough $\varepsilon$ relative to $C\tilde{C}_LK_2^2$.

We have determined
\[|M_{\varepsilon,2}-\varepsilon^2|\leq{}\varepsilon^2(C_1+1)\exp\left(-\frac{r_L}{2\varepsilon^2}t\right)+\varepsilon^{3-\delta}\frac{K_1}{4}\int_{0}^{\frac{r_L}{2\varepsilon 2}t}H_{2,\varepsilon}^2\left(t-\frac{2\varepsilon^2{z}}{r_L}\right)\exp\left(-z\right)dz,\]
which yields \eqref{eqn:M2close} for $K_1$ chosen sufficiently large with respect to $C_1$. To guarantee the right hand side is less than $K_1\varepsilon^2H_{1,\varepsilon}(t)$ we require that $H_{1,\varepsilon}$ satisfies \[H_{1,\varepsilon}\geq{}\exp\left(-\frac{r_L}{2\varepsilon^2}t\right)+\varepsilon^{1-\delta}\frac{1}{4}\int_{0}^{\frac{r_L}{2\varepsilon 2}t}H_{2,\varepsilon}^2\left(t-\frac{2\varepsilon^2{z}}{r_L}\right)\exp\left(-z\right)dz,\]
and we also recall the requirement that \[1+H_{1,\varepsilon}\leq{}H_{2,\varepsilon}.\]
In particular we may choose $H_{1,\varepsilon}$ depending on $H_{2,\varepsilon}$ such that 
\begin{equation}\label{eqn:H1Def}
H_{1,\varepsilon}=\exp\left(-\frac{\eta}{\varepsilon^2}t\right)+\varepsilon^{1-\delta}\frac{1}{4}\int_{0}^{\frac{r_L}{2\varepsilon 2}t}H_{2,\varepsilon}^2\left(t-\frac{2\varepsilon^2{z}}{r_L}\right)\exp\left(-z\right)dz.
\end{equation}
Here we have assumed $\eta<\frac{r_L}{2}$ without loss of generality. This constrains $H_{2,\varepsilon}$ to satisfy 
\begin{equation}\label{eqn:H2Bound1}
    H_{2,\varepsilon}(t)\geq{}1+\exp\left(-\frac{\eta}{\varepsilon^2}t\right)+\varepsilon^{1-\delta}\frac{1}{4}\int_{0}^{\frac{r_L}{2\varepsilon 2}t}H_{2,\varepsilon}^2\left(t-\frac{2\varepsilon^2{z}}{r_L}\right)\exp\left(-z\right)dz.
\end{equation}
We now focus on the higher order moments. 
Because of (H4) we derive from \eqref{eqn:HigherOrderMoments} the inequality
\begin{equation}\label{eqn:HigherOrderMoments2}
\dot{M}_{\varepsilon,2k}^c+\frac{\eta}{\varepsilon^2}{}M_{\varepsilon,2k}^c\leq{}\frac{1}{\varepsilon^2}F_{2k}+|\dot{X}_\varepsilon(t)M_{\varepsilon,2k-1}^{c}|.
\end{equation}
We recall the bound of $F_{2k_0}$ in terms of the moments
\begin{align*}
    F_{2k_0}\leq{}&(|F_1|+CA_m(|\widetilde{M}_{1,\varepsilon}|+|\widetilde{M}_{1,\varepsilon}|^p)M_{\varepsilon,2}^c)|M_{\varepsilon,2k_0-1}^c|\\
    &+CA_m{}M_{\varepsilon,2k_0}^c\left((1+|\widetilde{M}_{\varepsilon,1}|^p)M_{\varepsilon,2}^c+M_{\varepsilon,2+p}^{|c|}\right)\\
    &+\sum_{l=0}^{k_0-1}\sum_{j=0}^{2l}\sigma_{k_0-l}\varepsilon^{2(k_0-l)}\frac{1}{4^l}{2k_0\choose{2l}}{2l\choose{j}}M_{\varepsilon,2l-j}^c{}M_{\varepsilon,j}^c\\
    &+\sum_{j=2}^{2k_0-2}\frac{1}{4^{k_0}}{2k_0\choose{j}}M_{\varepsilon,2k_0-j}^c{}M_{\varepsilon,j}^c,
\end{align*}

where $|F_1|\leq{}CA_m((1+|\widetilde{M}_{\varepsilon,1}|^p)M_{\varepsilon,3}^{|c|}+M_{\varepsilon,3+p}^{|c|})$. We first bound \[|F_1||M_{\varepsilon,2k_0-1}^c|\leq{}\varepsilon^{2k_0+2}C\tilde{C}_LK_{2}H_{2,\varepsilon}^2.\]
The next term can be bounded by
\begin{align*}
CA_m(|\widetilde{M}_{1,\varepsilon}|+|\widetilde{M}_{1,\varepsilon}|^p)M_{\varepsilon,2}^c)|M_{\varepsilon,2k_0-1}^c|\leq{}C\tilde{C}_L\varepsilon^{2k_0+1}K_{2}^2H_{2,\varepsilon}^2(t)
\end{align*}
The term on the second line may be bounded by
\begin{align*}
CA_m{}M_{\varepsilon,2k_0}^c\left((1+|\widetilde{M}_{\varepsilon,1}|^p)M_{\varepsilon,2}^c+M_{\varepsilon,2+p}^{|c|}\right)&\leq{}\varepsilon^{2k_0}CA_mH_{2,\varepsilon}((1+L^p)\varepsilon^2{}H_{2,\varepsilon}+\varepsilon^{2+p}H_{2,\varepsilon})\\
&\leq{}\varepsilon^{2k_0+2}C\tilde{C}_LK_{2}^2H_{2,\varepsilon}^2.
\end{align*}
Overall, we have
\begin{align*}
&(|F_1|+CA_m(|\widetilde{M}_{1,\varepsilon}|+|\widetilde{M}_{1,\varepsilon}|^p)M_{\varepsilon,2}^c)|M_{\varepsilon,2k_0-1}^c|\\
&+CA_m{}M_{\varepsilon,2k_0}^c\left((1+|\widetilde{M}_{\varepsilon,1}|^p)M_{\varepsilon,2}^c+M_{\varepsilon,2+p}^{|c|}+\widetilde{M}_{\varepsilon,1}^2+\widetilde{M}_{\varepsilon,1}^{p+2}\right)\leq{}\varepsilon^{2k_0+1}C\tilde{C}_LK_{2}^2H_{2,\varepsilon}^2.
\end{align*}
For the final terms, we estimate
\begin{align*}
M_{\varepsilon,2l-j}^{c}M_{\varepsilon,j}^{c}\leq{}&\varepsilon^{2l}K_{2}^\frac{2k_0-4}{2k_0-2}K_{1}^\frac{4k_0-2l}{2k_0-2}H_{2,\varepsilon}(t)^2\\
\leq{}&\varepsilon^{2l}K_2^\frac{k_0-2}{k_0-1}K_1^\frac{2k_0}{k_0-1}H_{2,\varepsilon}(t)^2,\\
\leq{}&\varepsilon^{2l}\eta_1K_2H_{2,\varepsilon}(t)^2
\end{align*}
where $\eta_1$ may be chosen arbitrarily small if $K_2$ is chosen sufficiently large compared to $K_1$. Altogether, the final line may be bounded as 
\begin{align*}
&\sum_{l=0}^{k_0-1}\sum_{j=0}^{2l}\sigma_{k_0-l}\varepsilon^{2(k_0-l)}\frac{1}{4^l}{2k_0\choose{2l}}{2l\choose{j}}M_{\varepsilon,2l-j}^c{}M_{\varepsilon,j}^c\\
&+\sum_{j=2}^{2k_0-2}\frac{1}{4^{k_0}}{2k_0\choose{j}}M_{\varepsilon,2k_0-j}^c{}M_{\varepsilon,j}^c\leq{}\eta_1C_{k_0}K_2\varepsilon^{2k_0}H_{2,\varepsilon}(t)^2.
\end{align*}
It follows that
\[|F_{2k}|\leq{}\varepsilon^{2k_0}\eta_{2}K_2H_{2,\varepsilon}(t)^2.\]
We may also derive the bound
\[|M^{c}_{2k-1}|\leq{}\varepsilon ^{2k-1}\eta_2{}K_2H_{2,\varepsilon}(t),\]
by again choosing $K_2$ sufficiently large with respect to $K_1$. 
Inserting these controls into \eqref{eqn:HigherOrderMoments2} and integrating yields
\begin{align*}
M_{\varepsilon,2k_0}^c\leq{}&K_2\varepsilon^{2k_0}\exp\left(-\frac{\eta}{\varepsilon^2}t\right)+\varepsilon^{2k_0-2}\eta_2K_{2}\int_{0}^{t}H_{2,\varepsilon}(t)^2\exp\left(-\frac{\eta}{\varepsilon^2}(t-s)\right)ds\\
    &+\varepsilon^{2k_0-1}\eta_2K_2\int_{0}^{t}H_{2,\varepsilon}(s)|\dot{X}_{\varepsilon}(s)|\exp\left(-\frac{\eta}{\varepsilon^2}(t-s)\right)ds.
\end{align*}
we may write the RHS as 
\begin{align*}
\varepsilon^{2k_0}K_2&\left(\exp\left(-\frac{\eta}{\varepsilon^2}t\right)+\varepsilon^{-2}\eta_2\int_{0}^{t}H_{2,\varepsilon}(t)^2\exp\left(-\frac{\eta}{\varepsilon^2}(t-s)\right)ds\right.\\
&\left.+\varepsilon^{-1}\eta_2\int_{0}^{t}H_{2,\varepsilon}(s)|\dot{X}_{\varepsilon}(s)|\exp\left(-\frac{\eta}{\varepsilon^2}(t-s)\right)ds\right).
\end{align*}
From the above, we can now ensure that that inequality \eqref{eqn:M1closeT} holds provided that
\begin{align}\label{eqn:H2Bound2}
H_{2,\varepsilon}(t)\geq{}&\exp\left(-\frac{\eta}{\varepsilon^2}t\right)+\varepsilon^{-2}\eta_2\int_{0}^{t}H_{2,\varepsilon}(t)^2\exp\left(-\frac{\eta}{\varepsilon^2}(t-s)\right)ds\\
&+\varepsilon^{-1}\eta_2\int_{0}^{t}H_{2,\varepsilon}(s)|\dot{X}_{\varepsilon}(s)|\exp\left(-\frac{\eta}{\varepsilon^2}(t-s)\right)ds.\nonumber
\end{align}
We therefore have to find an $H_{2,\varepsilon}(t)$ which satisfies both \eqref{eqn:H2Bound1} and \eqref{eqn:H2Bound2}. Consider $\bar{H}$ which solves
\begin{align}\label{eqn:BarH}
\bar{H}=&1+\exp\left(-\frac{\eta}{\varepsilon^2}t\right)+\varepsilon^{-2}\eta_2\int_{0}^{t}\bar{H}(s)^2\exp\left(-\frac{\eta}{\varepsilon^2}(t-s)\right)ds\\
&+\varepsilon^{-1}\eta_2\int_{0}^{t}\bar{H}(s)|\dot{X}_{\varepsilon}(s)|\exp\left(-\frac{\eta}{\varepsilon^2}(t-s)\right)ds.\nonumber
\end{align}
We now use the Banach fixed-point theorem to show there exists a solution to \eqref{eqn:BarH}, and then show this solution satisfies the required bounds. Let $M_T=\left\{h\in{}C^{1}(\mathbb{R}^{+}): \max_{t\in[0,T]}|h(s)|\leq{3}\right\}$, and $d_T:M\times{M} \mapsto \mathbb{R}^{+}$ such that $d_T(f,g)=\max_{t\in[0,T]}|f(s)-g(s)|$. We see that $(M_T,d_T)$ forms a complete metric space for any given $T$. We next show the mapping $F: M_T \to C^1(\mathbb{R})$ given by
\begin{align*}
F[h]=&1+\exp\left(-\frac{\eta}{\varepsilon^2}t\right)+\varepsilon^{-2}\eta_2\int_{0}^{t}h(s)^2\exp\left(-\frac{\eta}{\varepsilon^2}(t-s)\right)ds\\
&+\varepsilon^{-1}\eta_2\int_{0}^{t}h(s)|\dot{X}_{\varepsilon}(s)|\exp\left(-\frac{\eta}{\varepsilon^2}(t-s)\right)ds.\nonumber
\end{align*}
 is contractive. That is, we show $F(M_T)\subseteq{}M_T$ and for all $h_1,h_2\in{}M_T$ we have that $d_T(F[h_1],F[h_2])<\gamma{}d_T(h_1,h_2)$ for some $\gamma\in(0,1)$. We obtain for $h\in{}M_T$, using (H7) for the second integral,
 \[F[h]\leq{}2+9\frac{\eta_2}{\eta}+3\eta_2L_X.\]
 This shows $F(M_T)\subseteq{}M_T$ provided $\eta_2\leq{}\frac{\eta}{9+3L_X\eta}$, and so we impose this condition on $\eta_2$. To show the second part
 we write, for $h_1,h_2\in{}M_T$,
\begin{align*}
    |F[h_1](t)-F[h_2](t)|=&\left\vert\varepsilon^{-2}\eta_2\int_{0}^{t}(h_1(s)^2-h_2(s)^2)\exp\left(-\frac{\eta}{\varepsilon^2}(t-s)\right)\right.\\
&\left.+\varepsilon^{-1}\eta_2\int_{0}^{t}(h_1(s)-h_2(s))|\dot{X}_{\varepsilon}(s)|\exp\left(-\frac{\eta}{\varepsilon^2}(t-s)\right)ds\right\vert,\\
    &\leq{}\eta_2\left(\frac{6}{\eta}+L_X\right)d_T(h_1,h_2),\\
    &\leq{}\frac{\eta_2(6+L_X\eta)}{\eta}d_T(h_1,h_2),
\end{align*}
where we again make use of (H7) in the second last line. We have that $\frac{\eta_2(6+L_X\eta)}{\eta}<1$ from $\eta_2\leq{}\frac{\eta}{9+3L_X\eta}$ . This confirms that $F$ is a contractive mapping. Applying the Banach-fixed point theorem then ensures there exists a unique solution $\bar{H}\in{}M_T$.

We choose $H_{2,\varepsilon}(t)=\bar{H}$. The bound \eqref{eqn:H2Bound2} is satisfied immediately. Furthermore, $\bar{H}(0)=2$, so $H_{2,\varepsilon}(0)=2$, as we assumed, and by the the choice of $M_T$, we have that $\bar{H}(t)\leq{3}$  for $t\geq{0}$. The bound  $H_{1,\varepsilon}(t)\leq{}3\varepsilon^{1-\delta}$ for $t$ sufficiently large follows from using the bound $H_{2,\varepsilon}(t)<3$ in \eqref{eqn:H1Def}. It remains to check the choice $H_{2,\varepsilon}(t)=\bar{H}(t)$ satisfies \eqref{eqn:H2Bound1}, but this is straightforward for $\varepsilon<1$.




We now proceed to write the evolution equation for $\widetilde{M}_{\varepsilon,1}$ as
    \[\dot{\widetilde{M}}_{\varepsilon,1}+\dot{X}_\varepsilon(t)+m_{t}'(\widetilde{M}_{\varepsilon,1})=-\tilde{F}_1,\]
    where \begin{align*}
        |\tilde{F}_1|&\leq{}\frac{1}{\varepsilon^ 2}|F_1|+\frac{1}{\varepsilon^ 2}|m_t'(\widetilde{M}_{\varepsilon,1})||M_{\varepsilon,2}^c-\varepsilon^2|\\
        &\leq{}C\tilde{C}_LK_1H_{1,\varepsilon}(t).
    \end{align*}
Here we have used \eqref{eqn:F1bound} and the fact that $|M_{\varepsilon,1}^c-X_\varepsilon(t)|<L$ for $t\in[0,T_\varepsilon]$.

We now estimate $N_1=\tilde{m}_\varepsilon-Y_\varepsilon$  which satisfies
\[\dot{N}_1+m_{t}''(Y_\varepsilon)N_1=\tilde{F}_1-(m_t'(M_{\varepsilon,1})-m_t'(Y_\varepsilon)-m_{t}''(Y_\varepsilon)N_1).\]
We may use finite difference theorems to bound $|(m_t'(M_{\varepsilon,1})-m_t'(Y_\varepsilon)-m_{t}''(Y_\varepsilon)N_1)|<C_mK_0\varepsilon^{1-\delta}|N_1|$. Then
combining the convexity from (H1), the bound on the lag $|Y_\varepsilon|<L$ , and the $t$-independent bound of $\partial_{xxx}m(x,t)$ from (H3), we obtain 
\[\frac{d}{dt}\left[\frac{|N_1|^2}{2}\right]+A_0|N_1|^2\leq{}(\tilde{F}_1+C_mK_0\varepsilon^{1-\delta}|N_1|)N_1.\]
We arrive at
\[\frac{d}{dt}\left[\frac{|N_1|^2}{2}\right]+(A_0-C_{m}K_0\varepsilon^{1-\delta})|N_1|^2\leq{}\tilde{F}_1|N_1|.\]
As in \cite{GHM}, we estimate $\tilde{F}_1|N_1|\leq{}\frac{\tilde{F}_1^2}{2A_0}+\frac{A_0|N_1|^2}{2}$ and apply Gronwall's inequality yielding
\begin{align*}
    \frac{|N_1|^2}{2}\leq{}&\frac{C}{A_0}\int_{0}^{t}|\tilde{F}_1(s)|^2\exp\left({-A_0(t-s)}\right)\\
    \leq{}&\frac{C^2\tilde{C}_L^2K_1^2}{A_0}\int_{0}^{t}H_{1,\varepsilon}(s){e^{-A_0(t-s)}}ds\\
    \leq{}&\frac{C^2\tilde{C}_L^2K_1^2}{A_0}\left(\frac{\varepsilon^{2(1-\delta)}}{A_0}+\frac{\varepsilon^2}{r_L}\right).
\end{align*}

This shows the final inequality for large enough $K_0$ depending on $K_1$ and $L$.
 \end{proof}

Proposition \ref{prop:InfiniteT_eps} follows, since we have shown the inequalities (\ref{eqn:M2kcloseT})-(\ref{eqn:M1closeT}) are strict at $T_\varepsilon$. This is only possible if $T_\varepsilon=\infty$ since otherwise we could have taken a large $T_\varepsilon$, but this would contradict its definition.

\subsection{Proof of \cref{thm:main}} Having shown the moment estimates of \cref{thm:MomentEstimates} for the time-dependent mortality function,
\cref{thm:main} now follows from analogous computations as in \cite{GHM}, and we refer the reader there for details. Here we sketch only the main steps. The key distributions are: the trait-distribution $q_\varepsilon(x,t)$, the  Gaussian $g_\varepsilon(x,t)$ defined in \eqref{eqn:approximate_distribution}, with mean given by  $\bar{Z}_\varepsilon(t)$ that solves \eqref{eqn:approx_mean_trait}, and the intermediate Gaussian $\bar{g}_\varepsilon(x,t)$ centred on the  mean trait ${M}_{\varepsilon,1}(t)$, which we define below
\begin{equation}\label{eqn:approximate_distribution2}
    \bar{g}_\varepsilon(x,t)=\frac{1}{\sqrt{2\pi\varepsilon^2}}\exp\left(-\frac{(x-{M}_{\varepsilon,1}(t))^2}{2\varepsilon^2}\right).
\end{equation}

We compute a duality estimate for $q_\varepsilon-g_\varepsilon$ via 
\[I_\phi(t)=\int_{\mathbb{R}}\phi(x)(q_\varepsilon-g_\varepsilon).\]
Using \eqref{eq: qeps}, we determine
\[\varepsilon^2{}\dot{I}_\phi=rT_{r}-rI_\phi-T_s,\]
where 
\[T_r=\int_{\mathbb{R}}\phi(x)(\tilde{T}_\varepsilon[q_\varepsilon]-\tilde{T}_\varepsilon[g_\varepsilon]),\]
\begin{align*}
    T_s=&\int_{\mathbb{R}}\tilde{m}_\varepsilon(q_\varepsilon-g_\varepsilon)-\left(\left(\int_{\mathbb{R}}\widetilde{M}q_\varepsilon\right)\left(\int_{\mathbb{R}}q_\varepsilon\phi\right)-\widetilde{M}(\bar{Z}_\varepsilon)\left(\int_{\mathbb{R}}g_\varepsilon\phi\right)\right)\\
    &-\int_{\mathbb{R}}(x-\bar{Z}_\varepsilon)^2r^{\widetilde{M}}[\bar{Z}_\varepsilon](x)g_\varepsilon(x,t)\phi(x)dx\\
    &=T_s^{(a)}-T_s^{(b)}+T_s^{(c)}.
\end{align*}
In all cases, terms are controlled by obtaining bounds in terms of the difference $|M_{\varepsilon,1}-\bar{Z}_\varepsilon|$ which has been shown to be $O(\varepsilon^{1-\delta})$ according to Proposition \ref{prop:InfiniteT_eps}. The first term $T_r$ is controlled by writing it as 
\[\int_{\mathbb{R}}\phi(\tilde{T}_\varepsilon[q_\varepsilon]-\tilde{T}_\varepsilon[\bar{g}_\varepsilon])+\int_{\mathbb{R}}\phi(\tilde{T}_\varepsilon[\bar{g}_\varepsilon]-\tilde{T}_\varepsilon[g_\varepsilon]),\]
then using the following contraction property of the operator $\tilde{T}_\varepsilon$, derived in \cite[Appendix 4.2]{raoul2017macroscopic}, to bound the first part. We have for $g_1,g_2\in{}L^2(\mathbb{R})$ that $W_2(\tilde{T}(g_1),\tilde{T}(g_2))\leq{}\frac{1}{\sqrt{2}}W_2(g_1,g_2),$
leading to \[\int_{\mathbb{R}}\phi(\tilde{T}_\varepsilon[q_\varepsilon]-\tilde{T}_\varepsilon[\bar{g}_\varepsilon])\leq{}\frac{\Vert{\phi'}\Vert_{L^\infty}}{\sqrt{2}}\left(W_2(q_\varepsilon,g_\varepsilon)+W_2(g_\varepsilon,\bar{g}_\varepsilon)\right)\]
The next part is more straightforward to control using properties of $W_1$ and $\tilde{T}_\varepsilon$.

The term $T_s^{(c)}$ is controlled by using (H3) to bound $r^{\widetilde{M}}[\bar{Z}_\varepsilon]$, as well as the bound $|{M}_\varepsilon-X_\varepsilon(t)|<L$. The remaining terms are controlled by Taylor expanding $\widetilde{M}(x,t)$ about $M_{\varepsilon,1}$ or $\bar{Z}_\varepsilon$ as appropriate. This leads to 
\[\varepsilon^{2}\dot{I}_\phi+rI_\phi=F,\]
where \[|F|\leq{}\Vert\phi'\Vert_{L^\infty}\left(rc_0W_1(q_\varepsilon,g_\varepsilon)+\frac{K}{c_0^{\frac{1}{2}}}\varepsilon^{1-\delta}\right),\]
and $c_0>0$ is arbitrarily small.
Theorem \ref{thm:main} then follows from the Gronwall inequality, a sup argument on $\phi$ and $t\in[0,T]$ for fixed $T<\infty$, and the arbitrariness of $c_0$.

\section{Proof of \cref{thm:IP_main}}
\label{sec: proof of 2nd thm}
In this section, we prove \cref{thm:IP_main} that quantitatively establishes the asymptotic limits of the trait distribution and total population size for the prey-predator model \eqref{eqn:evolving_prey_simplified}. We will need the following technical lemma, which will allow us to bypasses the need for (H2).

\begin{lemma}\label{lma:KPPBound}

Suppose that $y_\varepsilon(t)$ solves 
\begin{equation}\label{eqn:GenKPP}
    \begin{cases}
\varepsilon^2\frac{dy_\varepsilon}{dt}=\alpha_\varepsilon(t)(H_\varepsilon(t)-y_\varepsilon)y_\varepsilon,\\
y_\varepsilon(0)=H_\varepsilon(0)+O(\varepsilon),
\end{cases}
\end{equation}
where there exit constants $\alpha_L,\alpha_U,H_L,H_U$ such that $0<\alpha_L<\alpha_\varepsilon(t)<\alpha_U$, $0<H_L<H_\varepsilon(t)<H_U$, $H_\varepsilon'(t)<C_H$, and $\alpha'_\varepsilon(t)<C_\alpha$. Then 
\begin{itemize}
    \item[(a)] $|y_\varepsilon(t)-H_\varepsilon(t)|\leq{}C\varepsilon$, where $C$ is a constant depending on $C_H$, $H_L$, $H_U$ and initial condition.
    \item[(b)] $\dot{y}_\varepsilon(t)\leq{}C_1+\varepsilon^{-1}C_2\exp^{-\frac{H_L}{2\varepsilon^2}t}.$
\end{itemize}
\end{lemma}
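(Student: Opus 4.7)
The plan is a quasi-static analysis: $y_\varepsilon$ tracks the slowly-moving attractor $H_\varepsilon(t)$ of a logistic-type flow, with relaxation on the fast time scale $\tau = t/\varepsilon^2$. I would introduce the deviation $u_\varepsilon(t) := y_\varepsilon(t) - H_\varepsilon(t)$ and subtract to obtain the linear (in $u_\varepsilon$) non-autonomous ODE
\begin{equation*}
\varepsilon^2 \dot u_\varepsilon + \alpha_\varepsilon(t)\, y_\varepsilon(t)\, u_\varepsilon = -\varepsilon^2 \dot H_\varepsilon(t),
\end{equation*}
whose damping rate $\alpha_\varepsilon y_\varepsilon /\varepsilon^2$ is at least $\alpha_L H_L/(2\varepsilon^2)$ as long as $y_\varepsilon \ge H_L/2$, while the inhomogeneous term $-\varepsilon^2 \dot H_\varepsilon$ is of size $O(\varepsilon^2)$.

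For (a), I would close a bootstrap argument. Since $y_\varepsilon(0) = H_\varepsilon(0) + O(\varepsilon) \ge H_L - O(\varepsilon)$, there is a maximal interval $[0,t^\ast)$ on which $y_\varepsilon \ge H_L/2$. On this interval the integrating-factor representation
\begin{equation*}
u_\varepsilon(t) = u_\varepsilon(0)\, \exp\!\left(-\tfrac{1}{\varepsilon^2}\!\int_0^t \!\alpha_\varepsilon y_\varepsilon\right) - \int_0^t \dot H_\varepsilon(s)\, \exp\!\left(-\tfrac{1}{\varepsilon^2}\!\int_s^t \!\alpha_\varepsilon y_\varepsilon\right) ds,
\end{equation*}
combined with $|u_\varepsilon(0)| = O(\varepsilon)$ and $|\dot H_\varepsilon| \le C_H$, gives via a standard Laplace-type estimate
\begin{equation*}
|u_\varepsilon(t)| \le C_0\varepsilon\, e^{-\alpha_L H_L t/(2\varepsilon^2)} + \frac{2\varepsilon^2 C_H}{\alpha_L H_L} \le C\varepsilon.
\end{equation*}
For $\varepsilon$ small this forces $y_\varepsilon \ge H_\varepsilon - C\varepsilon \ge 3H_L/4$, strictly improving the bootstrap hypothesis and hence extending to $t^\ast = \infty$.

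For (b), I would substitute the $u_\varepsilon$-bound directly into the original equation written as $\dot y_\varepsilon = -\varepsilon^{-2} \alpha_\varepsilon(t)\, u_\varepsilon\, y_\varepsilon$. The forced-response piece contributes $|\dot y_\varepsilon| \le \varepsilon^{-2}\alpha_U H_U \cdot 2\varepsilon^2 C_H/(\alpha_L H_L) =: C_1$, while the transient piece contributes $\varepsilon^{-2}\alpha_U H_U \cdot C_0\varepsilon\, e^{-\alpha_L H_L t/(2\varepsilon^2)} =: \varepsilon^{-1} C_2\, e^{-H_L t/(2\varepsilon^2)}$, after absorbing $\alpha_L$ into a slightly smaller decay rate if needed.

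The main obstacle I expect is making the bootstrap rigorous: in principle $y_\varepsilon$ could overshoot and cause the damping coefficient to lose its favourable sign. Fortunately the logistic structure guarantees invariance of an interval $[H_L/4,\, 2H_U]$ under the flow for $\varepsilon$ small (the vector field points inward on the boundary once $y_\varepsilon$ is in this range), so $y_\varepsilon$ cannot escape upward, and the $O(\varepsilon)$ estimate on $u_\varepsilon$ prevents escape downward. No ingredient beyond the integrating factor, a Laplace estimate, and this logistic invariance is needed.
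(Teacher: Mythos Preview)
Your argument is correct, but it follows a different line from the paper's. For part~(a) the paper exploits the Bernoulli structure of the logistic equation: setting $J_\varepsilon=1/y_\varepsilon$ linearises it exactly, giving an explicit integral formula for $J_\varepsilon$ which is then estimated by splitting time into a short initial layer $t<\varepsilon^\beta$ (on which $H_\varepsilon(s)\approx H_\varepsilon(t)$) and the remaining range $t>\varepsilon^\beta$ (on which the homogeneous contribution is exponentially small). Your approach instead works with the deviation $u_\varepsilon=y_\varepsilon-H_\varepsilon$, obtaining a linear equation whose damping coefficient depends on the unknown $y_\varepsilon$, and closes this with a bootstrap/continuation argument together with the inward-pointing logistic vector field. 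The paper's route yields an exact representation and avoids any bootstrap, at the cost of the time-splitting; your route is more elementary, does not rely on the special quadratic nonlinearity, and would generalise to other dissipative right-hand sides with a stable slow manifold.

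For part~(b) the routes also differ: the paper differentiates the ODE to obtain a linear equation for $\nu_\varepsilon=\dot y_\varepsilon$ and applies a Gronwall argument, whereas you read off $\dot y_\varepsilon=-\varepsilon^{-2}\alpha_\varepsilon u_\varepsilon y_\varepsilon$ and insert the transient/forced decomposition of $u_\varepsilon$ already obtained in~(a). Your way is shorter and reuses (a) efficiently. One small point: your decay rate is $\alpha_L H_L/(2\varepsilon^2)$ rather than the $H_L/(2\varepsilon^2)$ written in the statement; since only the order $\varepsilon^{-2}$ of the rate is used downstream, this discrepancy is harmless, but you should flag it rather than say $\alpha_L$ can be ``absorbed''.
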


\begin{proof}[Proof of Lemma \ref{lma:KPPBound}]
Letting $J_\varepsilon(t):=\frac{1}{y_\varepsilon(t)}$ we find that 
\[J_\varepsilon(t)=J_\varepsilon(0)e^{-\frac{\int_{0}^{t}\alpha_\varepsilon(s)H_\varepsilon(s)ds}{\varepsilon^2}}+\varepsilon^{-2}\int_{0}^{t}\alpha_{\varepsilon}(s)e^{-\frac{1}{\varepsilon^2}\int_{s}^{t}H_\varepsilon(\tau)\alpha_\varepsilon(\tau)d\tau}ds.\]
We focus for now on the latter term. Let $\beta\in(1,2)$, and suppose $t<\varepsilon^\beta$. Then for $0<s<t$ we have
\[|H_\varepsilon(s)-H_\varepsilon(t)|\leq{}C_H{}\varepsilon^\beta,\]
 due to the $\varepsilon$ independent bounds on $H_\varepsilon(t)$. We obtain, using also the uniform lower bounds on $\alpha_\varepsilon(t)$ and $H_\varepsilon(t)$,
\[J_\varepsilon(t)\leq{}J_\varepsilon
(0)e^{-\frac{H_\varepsilon(t)-C_H\varepsilon^\beta}{\varepsilon^2}\int_{0}^{t}\alpha_\varepsilon(\tau)d\tau}+\varepsilon^{-2}\int_{0}^{t}\alpha_\varepsilon(s)e^{-\frac{H_\varepsilon(t)-C_H\varepsilon^\beta}{\varepsilon^2}\int_{s}^{t}\alpha_\varepsilon(\tau)d\tau}ds.\]
A change of variables allows us to simplify the RHS to give 
\[J_\varepsilon(t)\leq{}J_\varepsilon
(0)e^{-\frac{H_\varepsilon(t)-C_H\varepsilon^\beta}{\varepsilon^2}\int_{0}^{t}\alpha_\varepsilon(\tau)d\tau}+\frac{1}{H_\varepsilon(t)-C_H\varepsilon^\beta}\left(1-e^{-\frac{H_\varepsilon(t)-C_H\varepsilon^\beta}{\varepsilon^2}\int_{0}^{t}\alpha_\varepsilon(\tau)d\tau}\right).\]
A similar set of computations provides a lower bound
\[J_\varepsilon(t)\geq{}J_\varepsilon
(0)e^{-\frac{H_\varepsilon(t)+C_H\varepsilon^\beta}{\varepsilon^2}\int_{0}^{t}\alpha_\varepsilon(\tau)d\tau}+\frac{1}{H_\varepsilon(t)+C_H\varepsilon^\beta}\left(1-e^{-\frac{H_\varepsilon(t)+C_H\varepsilon^\beta}{\varepsilon^2}\int_{0}^{t}\alpha_\varepsilon(\tau)d\tau}\right).\]
From this we obtain, for $t\in(0,\varepsilon^\beta)$, \[J_\varepsilon\in\left(\min\left\{J_\varepsilon(0),\frac{1}{H_\varepsilon(t)+C_H\varepsilon^\beta}\right\},\max\left\{J_\varepsilon(0),\frac{1}{H_\varepsilon(t)-C_H\varepsilon^\beta}\right\}\right).\]
We now use the fact that $y_\varepsilon(0)=H_\varepsilon(0)+O(\varepsilon)=H_\varepsilon(t)+O(\varepsilon)$, to deduce that $y_\varepsilon(t)=H_\varepsilon(t)+O(\varepsilon)$ for $t\in[0,\varepsilon^\beta].$

If $t>\varepsilon^\beta$ instead, we find that the first term is exponentially small, and can estimate the latter similarly to above. Overall this shows $y_\varepsilon(t)=H_\varepsilon(t)+O(\varepsilon)$. This completes part $(a)$. 

We now show part $(b)$. First, we differentiate \eqref{eqn:GenKPP} and set $\nu_\varepsilon=\dot{y}_\varepsilon$.  We obtain 
\begin{align*}
\varepsilon^{2}\dot{\nu}_\varepsilon&=(H_\varepsilon(t)-y_\varepsilon)\nu_\varepsilon-(H_\varepsilon'(t)-\nu_\varepsilon)y_\varepsilon\\
&=-H_\varepsilon'(t)y_\varepsilon-(2y_\varepsilon-H_\varepsilon(t))\nu_\varepsilon.
\end{align*}
Using part (a), we find that $2y_\varepsilon-H_\varepsilon=H_\varepsilon(t)+O(\varepsilon)$ and so
\[\varepsilon^{2}\dot{v}_\varepsilon\leq{}2C_HH_U-\frac{H_L}{2}\nu_\varepsilon,\]
when $\nu_\varepsilon>0$ and
\[\varepsilon^{2}\dot{v}_\varepsilon\geq{}-2C_HH_U-\frac{H_L}{2}\nu_\varepsilon,\]
when $\nu_\varepsilon<0$. We therefore have, for $\overline{\nu}_\varepsilon=|\nu_\varepsilon|$,
\[\varepsilon^2\overline{\nu}_\varepsilon\leq{}2C_HH_U-\frac{H_L}{2}\overline{\nu}_\varepsilon,\]
from which we derive
\[
|\nu_\varepsilon|\leq{}2C_H\rho_U\left(1-e^{-\frac{H_L}{2\varepsilon^2}t}\right)+|\nu_\varepsilon(0)|e^{-\frac{H_L}{2\varepsilon^2}t}.
\]
Altogether, this gives $|\nu_\varepsilon|\leq{}C_1+C_2\varepsilon^{-1}e^{-\frac{H_L}{2\varepsilon^2}t}$, proving part (b). 

\end{proof}

\begin{proof}[{Proof of \cref{thm:IP_main}}]
The proof consists of verifying each hypothesis, which we first do locally in time, and then show this implies the conditions hold for all $t>0$ since relevant inequalities remain strict.  We begin with (H1). By definition $\partial_{x}m_{\varepsilon}(0,t)=0$. At $t=0$, we also have that $\partial_{xx}m(0,t)>A_0$ due to assumptions (A3) and (A2). By continuity, we have that this inequality remains true for $t\in[0,T_\varepsilon]$ for some $T_\varepsilon>0.$ In particular, we show this inequality remains strict at $t=T_\varepsilon$ by the end of Step 2.


To see that (H3) is satisfied notice that (A4) ensures that there is a constant ${A}_m$ such that $|\partial_{xxx}{m}_{\varepsilon}(x,t)|<A_m\rho_{1,\varepsilon}(t)(1+|x+X_\varepsilon|^{p-1})$. We note that it is straightforward to bound $\rho_{1,\varepsilon}<\frac{r_1}{\kappa^{*}}$ under assumption (A0), where we follow similar steps as in \cref{lma:KPPBound}. Therefore it follows that $|\partial_{xxx}{m}_{\varepsilon}(x,t)|<A_m\rho_{1,\varepsilon}(t)(1+|x+X_\varepsilon|^{p-1})\leq{}\frac{r_1}{\kappa_1^{*}}A_m(1+|x+X_\varepsilon|^{p-1})$. We will shortly show that $X_\varepsilon(t)$ is bounded as a consequence of $|\bar{Z}_\varepsilon-x^{*}|<L_1$ and estimates on the moments valid for some initial time.  Assumption (H4) corresponds exactly to (A5), and (H5) is contained within (A6). The expressions for the upper and lower bounds of $\rho_{1,\varepsilon}(0)$, as required from (H6), come from (A3); we have 
$-K_3\varepsilon+I(x_0)<\rho_{1,\varepsilon}(0)<K_3\varepsilon+I(x_0)$.

So far the assumptions have been satisfied trivially (at least locally), but (H7) requires a more delicate approach. From assumption (A5) and continuity of the solution, the following hold on an interval $[0,T_\varepsilon]$ for some $T_\varepsilon>0$, and large enough constants $K_0$, $K_1$, and $K_2$:
\begin{align}
 \label{eqn:M2kcloseIP} M^{c}_{\varepsilon,2k_0}(t) &  <  K_{2} \varepsilon^{2k_0}H_{2,\varepsilon}(t), \\
\label{eqn:M2closeIP} 
 \left | M_{\varepsilon,2}^{c}(t) -  \varepsilon^2  \right| & <  K_1\varepsilon^2 H_{1,\varepsilon}(t), \\
\label{eqn:M1closeIP} \left| M_{\varepsilon,1}(t) - \bar{Z}_{\varepsilon}(t) \right|  & <  K_0 \varepsilon^{1-{\delta}},
\end{align}
where $H_{1,\varepsilon}(t)$ and $H_{2,\varepsilon}(t)$ are defined in terms of $X_\varepsilon(t)$ as in the proof of \cref{prop:InfiniteT_eps}.

 We firstly let $0<T_2<T_\varepsilon$ be such that $|\bar{Z}_\varepsilon-x^{*}|<L_1$ for $t\in[0,T_2]$. We will show that $T_2=T_\varepsilon$ by the end of Step 2. 
 
\textbf{Step 1.} Here we obtain estimates on the averaged functions $\bar{f}[q_{1,\varepsilon}]$ and $\bar{\delta}[q_{1,\varepsilon}]$. 
A Taylor expansion of $\bar{\delta}[q_{1,\varepsilon}](t)$ about $M_{\varepsilon,1}$ gives
\[\bar{\delta}[q_{1,\varepsilon}]=\delta(M_{\varepsilon,1})+\int_{\mathbb{R}}(x-M_{\varepsilon,1})^2r^\delta[M_{\varepsilon,1}(t)](x)q_{1,\varepsilon}(t),\]
where we recall $r^\delta[M_{\varepsilon,1}](x)=\int_{0}^{1}(1-\sigma)\partial_{xx}\delta(M_{\varepsilon,1}+\sigma(x-M_{\varepsilon,1}))d\sigma$. Using (A4) we may bound this remainder uniformly as
$|r^\delta[M_{\varepsilon,1}](x)|\leq{}CA_m(1+|M_{\varepsilon,1}|^p+|x-M_{\varepsilon,1}|^p).$
Similar computations  may be applied to $\bar{f}[q_{1,\varepsilon}]$.
From this bound we write
\begin{align*}
    |\bar{\delta}[q_{1,\varepsilon}]-\delta(M_{\varepsilon,1})|<CA_m(M_{\varepsilon,2}(1+|M_{\varepsilon,1}|^p)+M_{\varepsilon,p+2}),\\
      |\bar{f}[q_{1,\varepsilon}]-f(M_{\varepsilon,1})|<CA_m(M_{\varepsilon,2}(1+|M_{\varepsilon,1}|^p)+M_{\varepsilon,p+2}). 
\end{align*}
Combining with \eqref{eqn:M1closeIP}, we determine that $\bar{\delta}[q_{1,\varepsilon}]=\delta(\bar{Z}_{\varepsilon})+O(|\bar{Z}_\varepsilon|^p\varepsilon^{1-\delta})$, and $\bar{f}[q_{1,\varepsilon}]=f(\bar{Z}_\varepsilon)+O(|\bar{Z}_\varepsilon|^p\varepsilon^{1-\delta})$ on $t\in[0,T_\varepsilon]$, and  $\bar{\delta}[q_{1,\varepsilon}]=\delta(\bar{Z}_{\varepsilon})+O(\varepsilon^{1-\delta})$, and $\bar{f}[q_{1,\varepsilon}]=f(\bar{Z}_\varepsilon)+O(\varepsilon^{1-\delta})$ on $t\in[0,T_2]$.

\textbf{Step 2.} We now use the estimates from Step 1 above to  bound $\rho_\varepsilon$ and $|\bar{Z}_\varepsilon|$. We may write the equation \eqref{eqn:IP_population_size} as
\begin{align*}
\frac{d\rho_{1,\varepsilon}}{dt}&=\varepsilon^{-2}\left(r_1-\rho_{1,\varepsilon}\left(\left(\frac{\bar{f}[q_{1,\varepsilon}](t)}{1+h\bar{f}[q_{1,\varepsilon}](t)}\rho_{1,\varepsilon}\right)^2+\kappa_1-\bar{\delta}[q_{1,\varepsilon}.](t)\right)\right)\rho_{1,\varepsilon}
\end{align*}

We recall the functions  $F:(x,I,C)\in\mathbb{R}^3\mapsto{}\mathbb{R}$ defined by $F(x,I,C)=\frac{f(x)C}{(1+hCI)^2}$, and $G(x,I): (x,I)\in\mathbb{R} ^2\mapsto\mathbb{R}$ defined by $G(x,I)=r_1-(F(x,I,f(x))^2+\kappa_1-\delta(x))I$. Using Step 1, we have on $[0,T_2]$
\begin{equation*}
\frac{d\rho_{1,\varepsilon}}{dt}=\varepsilon^{-2}\left(r_1-\rho_{1,\varepsilon}(F(\bar{Z}_\varepsilon,\rho_{1,\varepsilon},f(\bar{Z}_\varepsilon))^2+\kappa_1-\delta(\bar{Z}_\varepsilon)+O(\varepsilon^{1-\delta}))\right)\rho_{1,\varepsilon}.  
\end{equation*}
 We may therefore write
\begin{equation}\label{eqn:pop_dymn_app1}
\frac{d\rho_{1,\varepsilon}}{dt}=\varepsilon^{-2}(G(\bar{Z}_\varepsilon,\rho_{1,\varepsilon})+O(\varepsilon^{1-\delta}))\rho_{1,\varepsilon}.
\end{equation}
The function $(1+f(x)hI)^2G(x,I)$ is a cubic in $I$ with unique positive root $I(x)$ satisfying $I_L<I(x)<I_U$, provided $|x-x^{*}|<L_1$ (following from assumption (A1)). We therefore have the identity
\[G(x,I)(1+f(x)hI)^2=P(x,I)(I(x)-I),\] where $P(x,I)$ is a quadratic in $I$ whose leading term has a positive coefficient. Taking the partial derivative with respect to $I$, and using (A1), yields
$P(x,I(x))>G^{*}(1+f(x)hI(x))^2$ which in particular gives $P(x,I)>\frac{G^{*}}{2}$ provided $|I-I(x)|<L_2$ for some $L_2>0$. Applying the above identity in (\ref{eqn:pop_dymn_app1}) and rearranging yields
\begin{equation}\label{eqn:pop_dymn_app12}
\frac{d\rho_{1,\varepsilon}}{dt}=\frac{P(\bar{Z}_\varepsilon,\rho_{1,\varepsilon})}{(1+f(\bar{Z}_\varepsilon)h\rho_{1,\varepsilon})^2}\varepsilon^{-2}(I(\bar{Z}_\varepsilon)-\rho_{1,\varepsilon}+O(\varepsilon^{1-\delta}))\rho_{1,\varepsilon}.
\end{equation}
from which we may derive that $\rho_{1,\varepsilon}=I(\bar{Z}_\varepsilon)+O(\varepsilon^{1-\delta})$ for $t\in[0,T_2]$. 
This follows from \cref{lma:KPPBound}, the bound on $\rho_{1,\varepsilon}(0)$ from (A3), and the fact that the first derivative of $I(\bar{Z}_\varepsilon)$ is bounded independently of $\varepsilon$. The last fact also relies on the assumption that $I(x)$ changes continuously in $x$ for $|x-x^{*}|<L_1$. As a consequence, $P(\bar{Z}_\varepsilon,\rho_{1,\varepsilon})=P(\bar{Z}_\varepsilon,I(\bar{Z}_\varepsilon))+O(\varepsilon^{1-\delta})>\frac{G^{*}}{2}$ for $t\in[0,T_2]$ for $\varepsilon$ small enough depending on $L_2$.

We similarly show the dynamics of $\bar{Z}_\varepsilon$ for $t\in[0,T_2]$ are given by
\begin{equation}\label{eq:trait_dynamics_approxim}
    \dot{\bar{Z}}_\varepsilon=-(\partial_{x}F(\bar{Z}_\varepsilon,I(\bar{Z}_\varepsilon))F(\bar{Z}_\varepsilon,I(\bar{Z}_\varepsilon))-\partial_{x}\delta(\bar{Z}_\varepsilon))I(\bar{Z}_\varepsilon)+O(\varepsilon^{1-\delta}).
\end{equation}
Assumption (A2)  yields that
\begin{equation}\label{eq:trait_dynamics_approxim}
    \dot{\bar{Z}}_\varepsilon=-A_0|x^{*}+O(\varepsilon^{1-\delta})-\bar{Z}_\varepsilon|.
\end{equation}
which implies $\bar{Z}_\varepsilon$ converges to within $O(\varepsilon^{1-\delta})$ of $x^{*}$ and in particular at $|\bar{Z}_\varepsilon(T_2)-x^{*}|<L_1$ for small enough $\varepsilon$. This shows that $T_2=T_\varepsilon$.

As a consequence, we also have for $t\in[0,T_\varepsilon]$ the following: $\partial_{xx}m(0,t)>A_0$ as required for (H1) (i.e. the strict inequality holds to the end of the interval), and $X_\varepsilon$ is uniformly bounded using (A2) and the bounds on $\bar{Z}_\varepsilon$, thereby showing (H3) is satisfied at the end point of $[0,T_\varepsilon]$.

\textbf{Step 3.} We now use (5.1)-(5.3) to control the derivative of $\bar{f}[q_{1,\varepsilon}](t)$, since this is  needed to control  $\dot{\rho}_{\varepsilon,1}(t)$ and ultimately $\dot{X}_\varepsilon(t)$.
The evolution of $q_{1,\varepsilon}(t)$ is governed by 
\begin{equation}\label{eqn:distribution_IP}
\partial_{t}q_{1,\varepsilon}= r_1 \left(\widetilde T_{\varepsilon}[q_{1,\varepsilon}] - q_{1,\varepsilon} \right) 
-  \left( \widetilde{M}_{\varepsilon}(x,t)  - \int_{\mathbb R} m(x-X_\varepsilon(t),t)q_{1,\varepsilon}(x,t) dx \right) q_{1,\varepsilon}.
\end{equation}
By multiplying \eqref{eqn:distribution_IP} by $f$, and integrating we obtain the following equation for the evolution of $\bar{F}$:
\begin{equation}\label{eqn:barf_evo}
\frac{\bar{f}[q_{1,\varepsilon}]}{dt}=\varepsilon^{-2}\left(r_1(\bar{f}[\tilde{T}_{\varepsilon}[q_{1,\varepsilon}]](t)-\bar{f}[q_{1,\varepsilon}])-\overline{mf}[q_{1,\varepsilon}]+\bar{m}[q_{1,\varepsilon}]\bar{f}[q_{1,\varepsilon}]\right) 
\end{equation}
By similar computations as in Step 1 we find  $|\overline{mf}[q_{1,\varepsilon}]-\bar{m}[q_{1,\varepsilon}]\bar{f}[q_{1,\varepsilon}]|<\frac{C_{f}}{2}\varepsilon^2$ for some constant $C_f$ depending on $f$, $\max_{t\in[0,T_\varepsilon]}|{X}_\varepsilon(t)|$ and $L_1$. We also require the bounds obtained in Step 1 here since $\bar{m}[q_{1,\varepsilon}]$ and $\overline{mf}[q_{1,\varepsilon}]$ depend on $\rho_{1,\varepsilon}(t)$. An identical estimate applies to $r_1(\bar{f}[\tilde{T}_{\varepsilon}[q_{1,\varepsilon}]](t)-\bar{f}[q_{1,\varepsilon}])$, potentially enlarging $C_{f}$, and so we finally obtain 
\[\frac{\bar{f}[q_{1,\varepsilon}]}{dt}<C_f,~~t\in[0,T_\varepsilon].\]
We may similarly bound 
\[\frac{\bar{\delta}[q_{1,\varepsilon}]}{dt}<C_\delta,~~t\in[0,T_\varepsilon].\]
\textbf{Step 4.} The goal is now to control $\vert{\dot{X}_\varepsilon(t)}|$. We begin by bounding $\frac{d\rho_{1,\varepsilon}}{dt}$ which is the essential difficulty.
By following similar computations as in Step 2, we express (\ref{eqn:IP_population_size}) as
\begin{align*}
\frac{d\rho_{1,\varepsilon}}{dt}&=\frac{P(M_{\varepsilon,1},\rho_{1,\varepsilon})}{(1+f(M_{\varepsilon,1})h\rho_{1,\varepsilon})^2}\varepsilon^{-2}(I(M_{\varepsilon,1})-\rho_{1,\varepsilon}+O(\varepsilon))\rho_{1,\varepsilon}.
\end{align*}

We now verify the assumptions of \cref{lma:KPPBound}. By assumption (A3) we have that ${\rho}_{1,\varepsilon}(0)=I(M_{1,\varepsilon}(0))+O(\varepsilon)$. We also can see that  $\frac{dI(M_{\varepsilon,1})}{dt}=\dot{M}_{\varepsilon,1}I'(M_{\varepsilon,1})$ which is bounded by a constant depending on $\partial_{x}\widetilde{M}(x,t)$ and $\sup_{|x-x^{*}|<L_1}|I'(x)|$ and so $\left|\frac{d\rho_{1,\varepsilon}}{dt}(t)\right|<K_4\varepsilon^{-1}$ for $t\in[0,T_\varepsilon]$ follows from \cref{lma:KPPBound} for a constant $K_4$ independent of $\varepsilon$.

The function ${X}_\varepsilon(t)$ is implicitly defined as a positive minimum (in $x$) of $F(x,\rho_{1,\varepsilon},\bar{f}[q_{1,\varepsilon}](t))-\delta(x)$.
Therefore $X_\varepsilon(t)$ satisfies 
$\partial_{x}F(X_\varepsilon(t)+x_{m,0,\varepsilon},\rho_{1,\varepsilon},\bar{f}[q_{1,\varepsilon}](t))-\partial_{x}\delta(X_\varepsilon(t)+x_{m,0,\varepsilon})=0.$ Differentiating with respect to time yields
\begin{align*}
&\dot{X}_\varepsilon(t)(\partial_{xx}F(X_\varepsilon+x_{m,0,\varepsilon},\rho_{1,\varepsilon},\bar{f}[q_{1,\varepsilon}](t)))-\partial_{xx}\delta(X_\varepsilon+x_{m,0,\varepsilon}))\\
&-\partial_{xI}F(X_\varepsilon(t)+x_{m,0,\varepsilon},\rho_{1,\varepsilon},\bar{f}[q_{1,\varepsilon}](t))\frac{d\rho_{1,\varepsilon}}{dt}\\
&+\partial_{xC}F(X_\varepsilon(t)+x_{m,0,\varepsilon},\rho_{1,\varepsilon},\bar{f}[q_{1,\varepsilon}](t))\frac{d\bar{f}[q_{1,\varepsilon}](t)}{dt}=0.
\end{align*}

We may control $|\dot{X}_\varepsilon(t)|$ using the above expression and the following facts: the coefficient of $\dot{X}_\varepsilon$ is bounded away from $0$ as a consequence (A2); the results of Step 3  control  $\frac{d}{dt}(\bar{f}[q_{1,\varepsilon}](t))$; derivatives of $F$ are bounded in terms the model parameters; we have shown that $\left\vert\frac{d\rho_{1,\varepsilon}}{dt}\right\vert<K_4\varepsilon^{-1}$.  Altogether this shows 
\[|\dot{X}_\varepsilon|<K_5\varepsilon^{-1},~~t\in[0,T_\varepsilon]\] where $K_5$ depends only on $K_4,L_1$, and the model parameters except for $\varepsilon$. This suffices to shows the bound on $\dot{X}_\varepsilon$ required for (H7) for $t\in[0,T_\varepsilon]$.

\textbf{Step 5.}
To show  (H8) we can use the general result \cref{lma:KPPBound} to bound $\left\vert\frac{d\rho_{1,\varepsilon}}{dt}\right\vert\leq{}C_1+C_2\varepsilon^{-1}e^{-\frac{\gamma}{\varepsilon^2}t}$, where those constants are independent of $\varepsilon$.  We check that for $\nu_\varepsilon=\left\vert\frac{d\rho_{1,\varepsilon}}{dt}\right\vert$, for $\varepsilon$ sufficiently small, we have
\begin{align*}
\int_{0}^{t}|\nu_\varepsilon|(s)ds&\leq{}\int_{0}^{t}(C_1+C_2\varepsilon^{-1}e^{-\frac{\gamma}{\varepsilon^2}t})e^{-\frac{\eta}{\varepsilon^2}(t-s)}ds,\\
    &\leq{}\frac{C_2}{\eta}\varepsilon.
\end{align*}
This is sufficient to verify (H9) since it implies the lag $Y_\varepsilon=O(\varepsilon)$.

\textbf{Step 6.} Finally, we  move from local to global bounds. To do this, it's sufficient to note that all auxiliary results rely only the moment bounds \eqref{eqn:M2kcloseIP}-\eqref{eqn:M1closeIP} and the initial conditions. \cref{thm:main} then implies that $T_\varepsilon=\infty$. 

The remaining part of (H7), the convergence of $\tilde{m}_\varepsilon(x,t)$ and $X_\varepsilon(t)$ to $\widetilde{M}_0(x,t)$ and $X_0(t)$, now follows from $\rho_{1,\varepsilon}(t)=I(\bar{Z}_\varepsilon)+O(\varepsilon^{1-\delta})$ (itself a consequence of consequence \cref{lma:KPPBound}), and the estimates $\bar{\delta}[q_{1,\varepsilon}]=\delta(\bar{Z}_{\varepsilon})+O(\varepsilon^{1-\delta})$, and $\bar{f}[q_{1,\varepsilon}]=f(\bar{Z}_{\varepsilon})+O(\varepsilon^{1-\delta})$ obtained at the end of Step 1.
\vspace{-\belowdisplayskip}\[\]

\end{proof}

\section{Numerics}
\label{sec: numerics}
\begin{figure*}[t]
 \centering
  \subfloat[]{%
      \includegraphics[width=0.45\linewidth]{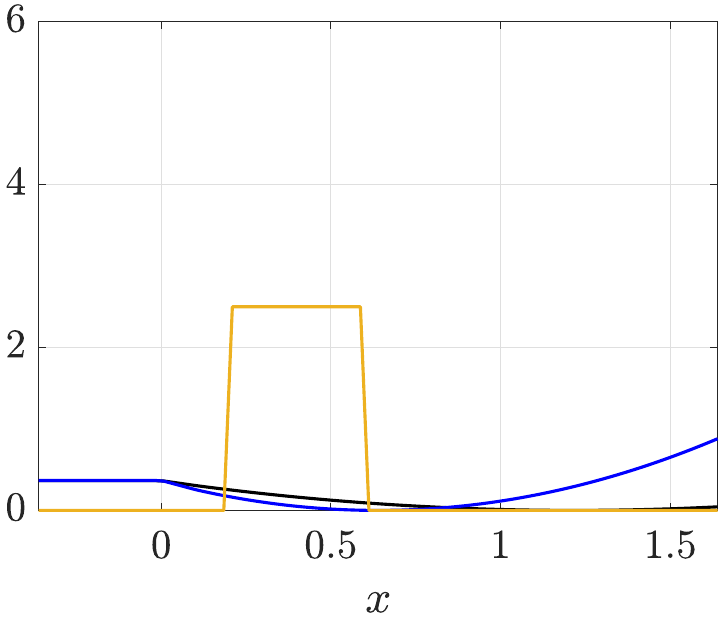}%
            \label{ScaledDistribution1:a}%
        }\hfill
        \subfloat[]{%
            \includegraphics[width=0.45\linewidth]{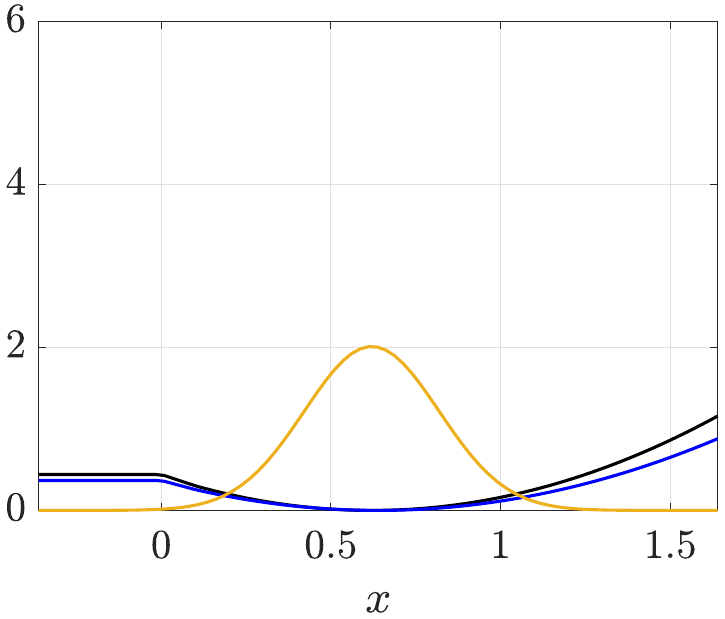}%
            \label{ScaledDistribution1:b}%
        }\\
         \subfloat[]{%
      \includegraphics[width=0.45\linewidth]{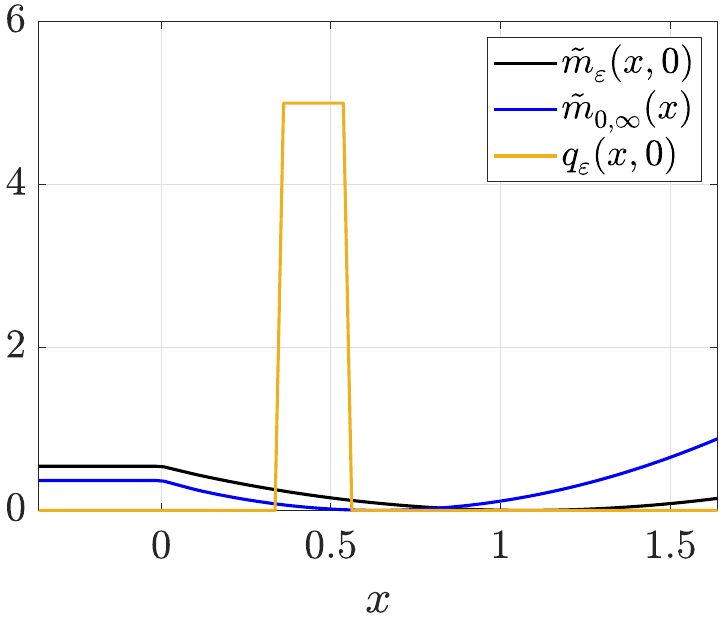}%
            \label{ScaledDistribution2:a}%
        }\hfill
        \subfloat[]{%
            \includegraphics[width=0.45\linewidth]{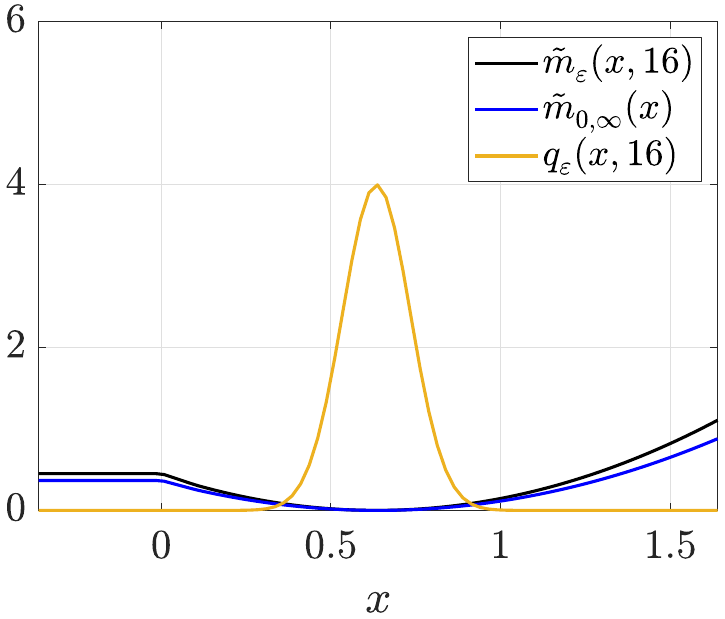}%
            \label{ScaledDistribution2:b}%
        }\\
        \caption{Phenotypic distribution $q_{\varepsilon}(x,t)$ (yellow) and the  mortality $\widetilde{M}(x,t)$ (black) for $\varepsilon=0.1$ (a),(b) $\varepsilon=0.2$ (c),(d), and $t=16$ (a),(c) $t=2$ (b),(d). Also shown is the equilibrium distribution $m_{0,\infty}(x)$ (black) to which large time solution of $m_{\varepsilon}(x,t)$ converges as $\varepsilon\rightarrow{0}$}
        \label{fig:ScaledDistributionMortality}
\end{figure*}
\begin{figure*}[t]
 \centering
  \subfloat[]{%
            \includegraphics[width=0.45\linewidth]{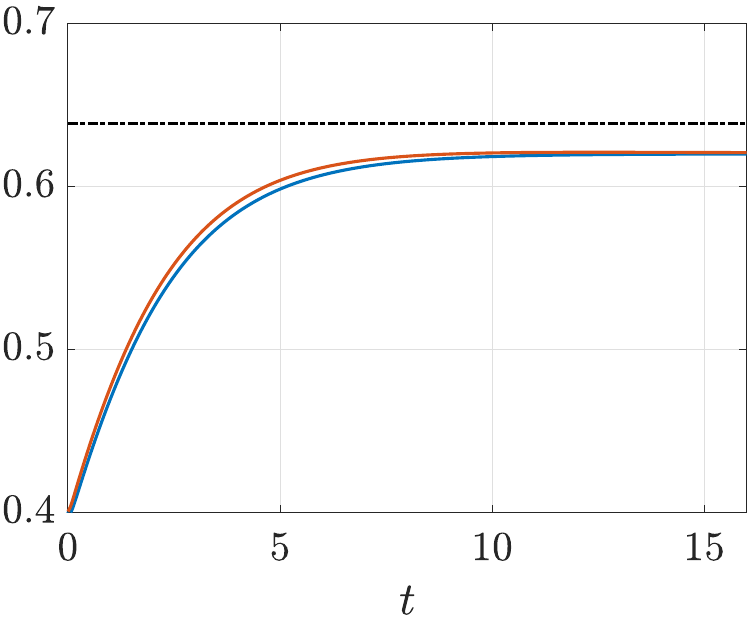}%
            \label{subfig:a}%
        }\hfill
        \subfloat[]{%
            \includegraphics[width=0.45\linewidth]{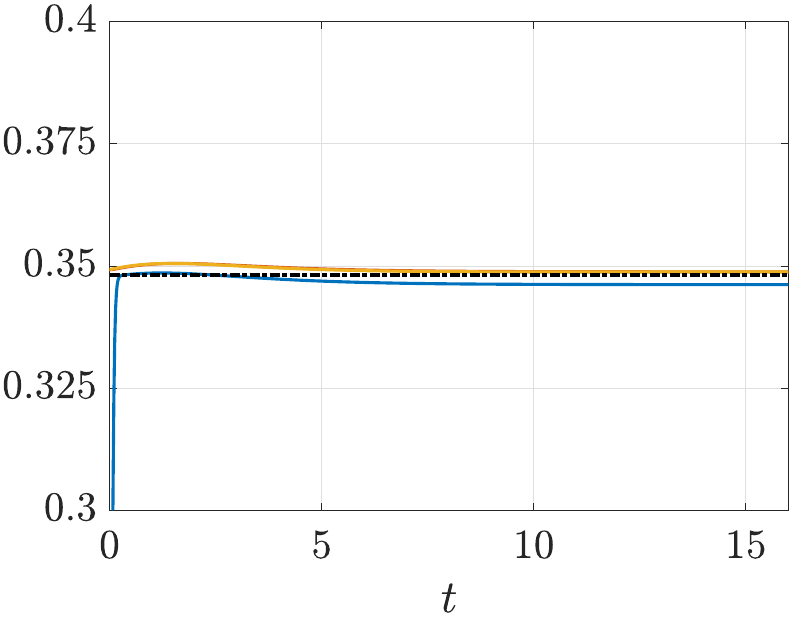}%
            \label{subfig:b}%
        }\\
        \subfloat[]{%
            \includegraphics[width=0.45\linewidth]{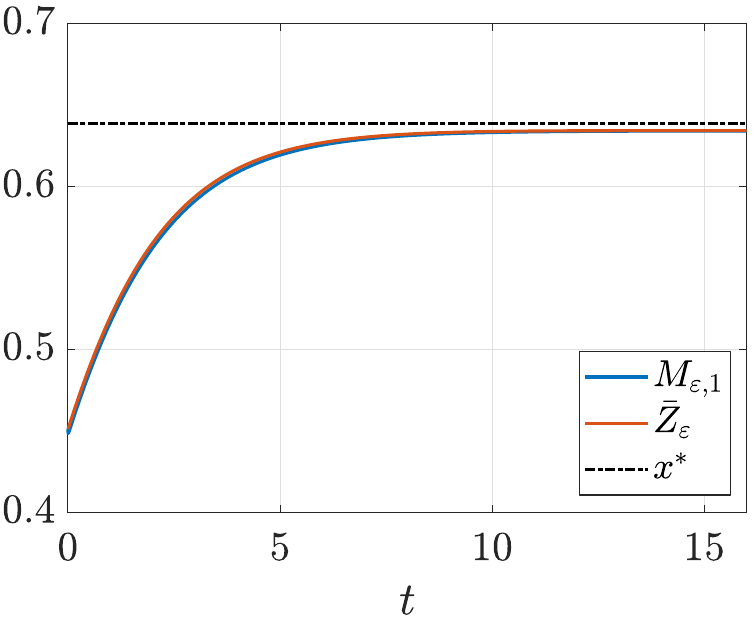}%
            \label{subfig:c}%
        }\hfill
        \subfloat[]{%
            \includegraphics[width=0.45\linewidth]{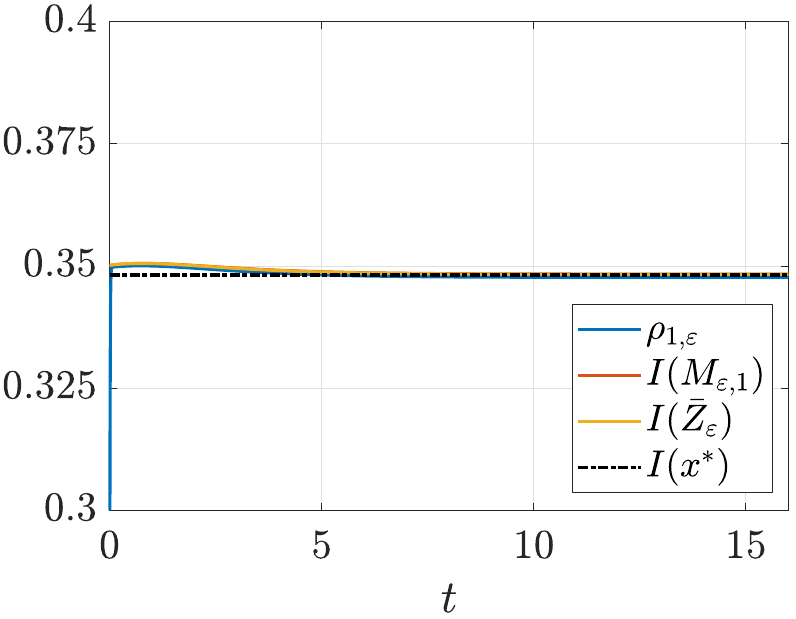}%
            \label{subfig:d}%
        }
        \caption{Trajectories of the mean trait $M_{\varepsilon,1}(t)$ and total population size for (a)-(b) $\varepsilon=0.2$ and (c)-(d) $\varepsilon=0.1$. For (a) and (c) the mean trait (solid blue) is compared compared against the approximate mean trait $\bar{Z}_\varepsilon(t)$ (red), and the fixed point $x^{*}$ (black dashed). For (c) and (d) $\varepsilon=0.1$. For (b) and (d) the mean trait (solid blue) is compared to $I(M_{\varepsilon,1})$ (solid red), $I(\bar{Z}_\varepsilon)$ (solid yellow), and the (proposed) limiting steady state $I(x^{*})$ (black dashed).}
        \label{fig:MeanTraitTotalPop}
\end{figure*}

To illuminate our theorems, we numerically solve system \eqref{eqn:IP_standard_form} for the specific choices $f(x)=\max(x,0.1)^2$, $\delta(x)=\frac{2}{5}\min(\kappa,\max(x,0))$, $r_1=1$, $M_{\varepsilon,1}(0)=0.8+\varepsilon/2$ (where $\varepsilon=0.2$ or $0.1$), which are in accordance with assumptions (A0)-(A5), except for the fact they are not globally $C^3(\mathbb{R})$. They are however locally $C^{3}(\mathbb{R})$ in the region of interest, and the results above are readily adapted to this case. We use a forward-in-time finite difference scheme on uniform discretisation of $[-L,L]$ into uniformly spaced grid points of width $\delta{x}=0.02$, with time-step $\delta{t}=\frac{1}{2}\varepsilon^2$. We solve with Dirichlet boundary conditions, and find that choosing $L=2$ is sufficiently large to mitigate errors introduced by using a finite domain.

We introduce here the following notation for the discrete interval $[[a,b]]=\{a,a+1,...,b\}$. Let  $x_i=x_0+i\delta{x}$ for $i\in[[-100,100]]$ and $t_n=n\delta{}t$ for $n\in[[0,N_t]]$ where $N_t=\lceil\frac{4}{\varepsilon^2}\rceil$. The initial condition is given by 
\[q_i^0=\frac{1}{2\varepsilon}\mathbf{1}_{\{x: |x-M_0|<\varepsilon\}}(x_i),\]
where $\mathbf{1}_{A}(x)$ is the indicator function of the set $A$, and we always specify $q^n$ is zero at the boundary points. For $i\in[[-100,100]]$ and $n\in[[1,N_t]]$ the numerical scheme is given as
\[\begin{cases}
q_i^{n+1}=q_i^n+(r_1{}(T^{n}_i-q_i^n)-(m_i^n-\bar{m}^n)q_i^n)\delta{t}\\
m_{i}^n=\rho^n\bar{f}^nf(x_i)-\delta(x_i),
\end{cases}\]
where averaged variables, denoted with a bar, are computed approximated using MATLABs \texttt{trapz} function using the function discrete approximation $q_i^n$. To improve stability, and since the operation should preserve probability distributions if taken over the entire domain, at each time step $q_i^n$ is normalized.

Figure \ref{fig:ScaledDistributionMortality} confirms that the mortality function $\tilde{m}_\varepsilon(x,t)$ converges, as $t\rightarrow\infty$, towards a limiting function $\widetilde{M}_{\varepsilon,\infty}
(x)$ which is independent of $t$. The phenotypic distribution closely approximates a Gaussian distribution given by Theorem \ref{thm:IP_main}.

Figure \ref{fig:MeanTraitTotalPop} shows the trajectory main trait and total population size. The trajectories for $M_{\varepsilon,1}(t)$ and $\bar{Z}_\varepsilon(t)$ are increasingly close for smaller $\varepsilon$, which in turn implies the same for $I(M_{\varepsilon,1})$ and $I(\bar{Z}_\varepsilon)$. As predicted, $\rho_{1,\varepsilon}(t)$ converges rapidly to its steady state which is at $O(\varepsilon^{1-\delta})$ distance from $K(\bar{Z}_\varepsilon(t))$.

\section{Discussion}\label{sec:Conclusion}
The first contribution of this work is the generalisation of the results in \cite{GHM} to the case of time-dependent birth and mortality rates. Our second contribution is to apply generalised results to a predator-prey model incorporating phenotype-dependent interactions between species. In the course of the asymptotic analysis, we develop assumptions on biologically relevant features such as the trait-dependent contact rate between the species, finding the reduced model is appropriate when, roughly speaking, the initial condition is concentrated not too far away from a local minimum of the mortality function. The leading order equations for the population size and mean trait obtained reveal how the ecological interaction between the two species shapes the trait-dependent mortality of the prey species. In this way, we show the moment-based approach is applicable to a model incorporating eco-evolutionary dynamics, and is able to capture key information about the transient and long-term dynamics, such as the mean and variance of the population. We note the analysis could be extended to higher order moments too. 

The analysis presented here could be applied  to model (\ref{eqn:evolving_prey}) in the case of $\tau>0$, or even for a trait-structure to the predator population as well. A natural question is: how do eco-evolutionary dynamics change as the time-scales between the two populations diverge in either direction? Applying the theory in this case would require showing that the mortality of each population satisfies (H0)-(H9).

Lastly, we highlight that fluctuating ecological dynamics may significantly alter evolutionary outcomes (see \cite{best2023fluctuating} for a review). Therefore an interesting direction for future work is to consider systems where $X_\varepsilon(t)$ does not approach a constant for large $t$. This is relevant when the eco-evolutionary dynamics possess dynamic attractors, such as limit cycles, rather than fixed points as in our predator-prey example. Our assumption (H7) that the optimal trait $X_\varepsilon(t)$ satisfies $\int_{0}^{t}|\dot{X}_\varepsilon(s)|\exp\left(-\frac{\eta}{\varepsilon^2}\right)\leq{}\varepsilon{}L_X$ for some constant $L_X$  allows the total variation in $X_\varepsilon(t)$ to be $O(\varepsilon^{-1})$, so Proposition \ref{prop:InfiniteT_eps}, for the generalized model is already applicable to systems where fluctuations in $X_\varepsilon(t)$ are not too large. We expect similar results are possible even in the case where the optimal trait varies yet more quickly and is oscillatory (as would be relevant for systems that are attracted to limit cycles). In this case, it should be possible to weaken the condition since the sign of $\dot{X}_\varepsilon(t)$ alternates, leading to cancellations.

\section*{Acknowledgments}
M.H.D is supported by EPSRC (grant EP/Y008561/1) and a Royal International Exchange Grant IES-R3-223047. F.S. acknowledges funding for a UKRI Future Leaders Fellowship MR/T043571/1. BvR is grateful for the support of the University of Birmingham.

This preprint has not undergone any post-submission improvements or corrections. The Version of Record of this article is
published in the Journal of Dynamics and Differential Equations, and is available online at https://doi.org/10.1007/s10884-026-10482-6.

\printbibliography

@articleInfo{GHM,
title = {A moment-based approach for the analysis of the infinitesimal model in the regime of small variance},
journal = {Kinetic and Related Models},
pages = {},
year = {2024},
issn = {1937-5093},
doi = {10.3934/krm.2024021},
url = {https://www.aimsciences.org/article/id/66ea8bc02bcf34161cd88c97},
author = {Jessica Guerand and Matthieu Hillairet and Sepideh Mirrahimi},
keywords = {Infinitesimal model, structured population, quantitative genetics, integro-differential equations, singular limits}
}

@article{hairston2005rapid,
  title={Rapid evolution and the convergence of ecological and evolutionary time},
  author={Hairston Jr, Nelson G and Ellner, Stephen P and Geber, Monica A and Yoshida, Takehito and Fox, Jennifer A},
  journal={Ecology letters},
  volume={8},
  number={10},
  pages={1114--1127},
  year={2005},
  publisher={Wiley Online Library}
}

@article{adler2019cancer,
  title={Cancer ecology and evolution: positive interactions and system vulnerability},
  author={Adler, Frederick R and Gordon, Deborah M},
  journal={Current opinion in systems biology},
  volume={17},
  pages={1--7},
  year={2019},
  publisher={Elsevier}
}

@article{keller1970,
  title={Initiation of slime mold aggregation viewed as an instability},
  author={Keller, Evelyn F and Segel, Lee A},
  journal={Journal of theoretical biology},
  volume={26},
  number={3},
  pages={399--415},
  year={1970},
  publisher={Elsevier}
}

@article{blanchet2011,
  title={On the parabolic-elliptic Patlak-Keller-Segel system in dimension 2 and higher},
  author={Blanchet, Adrien},
  journal={S{\'e}minaire Laurent Schwartz—EDP et applications},
  pages={1--26},
  year={2011}
}

@book{villani2021topics,
  title={Topics in optimal transportation},
  author={Villani, C{\'e}dric},
  volume={58},
  year={2021},
  publisher={American Mathematical Soc.}
}

@article{hadeler2004langevin,
  title={The Langevin or Kramers approach to biological modeling},
  author={Hadeler, Karl P and Hillen, Thomas and Lutscher, Frithjof},
  journal={Mathematical Models and Methods in Applied Sciences},
  volume={14},
  number={10},
  pages={1561--1583},
  year={2004},
  publisher={World Scientific}
}

@article{diekmann2005dynamics,
  title={The dynamics of adaptation: an illuminating example and a Hamilton--Jacobi approach},
  author={Diekmann, Odo and Jabin, Pierre-Emanuel and Mischler, St{\'e}phane and Perthame, Beno{\i}t},
  journal={Theoretical population biology},
  volume={67},
  number={4},
  pages={257--271},
  year={2005},
  publisher={Elsevier}
}

@article{nguyen2019adaptive,
  title={Adaptive dynamics of hematopoietic stem cells and their supporting stroma: A model and mathematical analysis},
  author={Nguyen, Thanh Nam and Clairambault, Jean and Jaffredo, Thierry and Perthame, Beno{\^\i}t and Salort, Delphine},
  journal={Mathematical Biosciences and Engineering},
  volume={16},
  number={05},
  pages={4818--4845},
  year={2019}
}

@article{dieckmann1996dynamical,
  title={The dynamical theory of coevolution: a derivation from stochastic ecological processes},
  author={Dieckmann, Ulf and Law, Richard},
  journal={Journal of mathematical biology},
  volume={34},
  pages={579--612},
  year={1996},
  publisher={Springer}
}

@article{marrow1996evolutionary,
  title={Evolutionary dynamics of predator-prey systems: an ecological perspective},
  author={Marrow, Paul and Dieckmann, Ulf and Law, Richard},
  journal={Journal of mathematical biology},
  volume={34},
  pages={556--578},
  year={1996},
  publisher={Springer}
}

@article{best2023fluctuating,
  title={How do fluctuating ecological dynamics impact the evolution of hosts and parasites?},
  author={Best, A and Ashby, B},
  journal={Philosophical Transactions of the Royal Society B},
  volume={378},
  number={1873},
  pages={20220006},
  year={2023},
  publisher={The Royal Society}
}

@article{ashby2019understanding,
  title={Understanding the role of eco-evolutionary feedbacks in host-parasite coevolution},
  author={Ashby, Ben and Iritani, Ryosuke and Best, Alex and White, Andy and Boots, Mike},
  journal={Journal of theoretical biology},
  volume={464},
  pages={115--125},
  year={2019},
  publisher={Elsevier}
}

@article{post2009eco,
  title={Eco-evolutionary feedbacks in community and ecosystem ecology: interactions between the ecological theatre and the evolutionary play},
  author={Post, David M and Palkovacs, Eric P},
  journal={Philosophical Transactions of the Royal Society B: Biological Sciences},
  volume={364},
  number={1523},
  pages={1629--1640},
  year={2009},
  publisher={The Royal Society London}
}

@article{govaert2019eco,
  title={Eco-evolutionary feedbacks—Theoretical models and perspectives},
  author={Govaert, Lynn and Fronhofer, Emanuel A and Lion, S{\'e}bastien and Eizaguirre, Christophe and Bonte, Dries and Egas, Martijn and Hendry, Andrew P and De Brito Martins, Ayana and Meli{\'a}n, Carlos J and Raeymaekers, Joost AM and others},
  journal={Functional Ecology},
  volume={33},
  number={1},
  pages={13--30},
  year={2019},
  publisher={Wiley Online Library}
}

@misc{pelletier2009eco,
  title={Eco-evolutionary dynamics},
  author={Pelletier, Fanie and Garant, Dany and Hendry, Andrew P},
  journal={Philosophical Transactions of the Royal Society B: Biological Sciences},
  volume={364},
  number={1523},
  pages={1483--1489},
  year={2009},
  publisher={The Royal Society London}
}

@article{villa2024reducing,
  title={Reducing phenotype-structured PDE models of cancer evolution to systems of ODEs: a generalised moment dynamics approach},
  author={Villa, Chiara and Maini, Philip K and Browning, Alexander P and Jenner, Adrianne L and Hamis, Sara and Cassidy, Tyler},
  journal={arXiv preprint arXiv:2406.01505},
  year={2024}
}

@article{turelli2017commentary,
  title={Commentary: Fisher’s infinitesimal model: A story for the ages},
  author={Turelli, Michael},
  journal={Theoretical population biology},
  volume={118},
  pages={46--49},
  year={2017},
  publisher={Elsevier}
}

@article{lorenzi2015dissecting,
  title={Dissecting the dynamics of epigenetic changes in phenotype-structured populations exposed to fluctuating environments},
  author={Lorenzi, Tommaso and Chisholm, Rebecca H and Desvillettes, Laurent and Hughes, Barry D},
  journal={Journal of theoretical biology},
  volume={386},
  pages={166--176},
  year={2015},
  publisher={Elsevier}
}

@article{figueroa2018long,
  title={Long time evolutionary dynamics of phenotypically structured populations in time-periodic environments},
  author={Figueroa Iglesias, Susely and Mirrahimi, Sepideh},
  journal={SIAM Journal on Mathematical Analysis},
  volume={50},
  number={5},
  pages={5537--5568},
  year={2018},
  publisher={SIAM}
}

@article{lorenzi2016tracking,
  title={Tracking the evolution of cancer cell populations through the mathematical lens of phenotype-structured equations},
  author={Lorenzi, Tommaso and Chisholm, Rebecca H and Clairambault, Jean},
  journal={Biology direct},
  volume={11},
  pages={1--17},
  year={2016},
  publisher={Springer}
}

@article{bouin2014travelling,
  title={Travelling waves for the cane toads equation with bounded traits},
  author={Bouin, Emeric and Calvez, Vincent},
  journal={Nonlinearity},
  volume={27},
  number={9},
  pages={2233},
  year={2014},
  publisher={IOP Publishing}
}

@article{champagnat2002canonical,
  title={The canonical equation of adaptive dynamics: a mathematical view},
  author={Champagnat, Nicolas and Ferri{\v{c}}re, R and Ben Arous4, G},
  journal={Selection},
  volume={2},
  number={1-2},
  pages={73--83},
  year={2002},
  publisher={Akad{\'e}miai Kiad{\'o}}
}

@article{lorenzi2020asymptotic,
  title={Asymptotic analysis of selection-mutation models in the presence of multiple fitness peaks},
  author={Lorenzi, Tommaso and Pouchol, Camille},
  journal={Nonlinearity},
  volume={33},
  number={11},
  pages={5791},
  year={2020},
  publisher={IOP Publishing}
}

@article{raoul2017macroscopic,
  title={Macroscopic limit from a structured population model to the Kirkpatrick-Barton model},
  author={Raoul, Ga{\"e}l},
  journal={arXiv preprint arXiv:1706.04094},
  year={2017}
}

@book{burger2000mathematical,
  title={The mathematical theory of selection, recombination, and mutation},
  author={B{\"u}rger, Reinhard},
  year={2000},
  publisher={John Wiley \& Sons}
}

@article{fussmann2007eco,
  title={Eco-evolutionary dynamics of communities and ecosystems},
  author={Fussmann, Gregor F and Loreau, Michel and Abrams, Peter A},
  journal={Functional ecology},
  pages={465--477},
  year={2007},
  publisher={JSTOR}
}

@article{hendry2011evolutionary,
  title={Evolutionary principles and their practical application},
  author={Hendry, Andrew P and Kinnison, Michael T and Heino, Mikko and Day, Troy and Smith, Thomas B and Fitt, Gary and Bergstrom, Carl T and Oakeshott, John and J{\o}rgensen, Peter S and Zalucki, Myron P and others},
  journal={Evolutionary Applications},
  volume={4},
  number={2},
  pages={159--183},
  year={2011},
  publisher={Wiley Online Library}
}

@article{mimura2017understanding,
  title={Understanding and monitoring the consequences of human impacts on intraspecific variation},
  author={Mimura, Makiko and Yahara, Tetsukazu and Faith, Daniel P and V{\'a}zquez-Dom{\'\i}nguez, Ella and Colautti, Robert I and Araki, Hitoshi and Javadi, Firouzeh and N{\'u}{\~n}ez-Farf{\'a}n, Juan and Mori, Akira S and Zhou, Shiliang and others},
  journal={Evolutionary applications},
  volume={10},
  number={2},
  pages={121--139},
  year={2017},
  publisher={Wiley Online Library}
}

@article{moran2016intraspecific,
  title={Intraspecific trait variation across scales: implications for understanding global change responses},
  author={Moran, Emily V and Hartig, Florian and Bell, David M},
  journal={Global Change Biology},
  volume={22},
  number={1},
  pages={137--150},
  year={2016},
  publisher={Wiley Online Library}
}

@article{bellard2012impacts,
  title={Impacts of climate change on the future of biodiversity},
  author={Bellard, C{\'e}line and Bertelsmeier, Cleo and Leadley, Paul and Thuiller, Wilfried and Courchamp, Franck},
  journal={Ecology letters},
  volume={15},
  number={4},
  pages={365--377},
  year={2012},
  publisher={Wiley Online Library}
}

@article{calsina1994non,
  title={Non-local reaction-diffusion equations modelling predator-prey coevolution},
  author={Calsina, {\'A}ngel and Perell{\'o}, Carles and Salda{\~n}a, Joan},
  journal={Publicacions matematiques},
  pages={315--325},
  year={1994},
  publisher={JSTOR}
}

@article{bouin2015hamilton,
  title={A Hamilton--Jacobi approach for a model of population structured by space and trait},
  author={Bouin, Emeric and Mirrahimi, Sepideh},
  journal={Communications in Mathematical Sciences},
  volume={13},
  number={6},
  pages={1431--1452},
  year={2015},
  publisher={International Press of Boston}
}

@article{ardavseva2020comparative,
  title={Comparative study between discrete and continuum models for the evolution of competing phenotype-structured cell populations in dynamical environments},
  author={Arda{\v{s}}eva, Aleksandra and Anderson, Alexander RA and Gatenby, Robert A and Byrne, Helen M and Maini, Philip K and Lorenzi, Tommaso},
  journal={Physical Review E},
  volume={102},
  number={4},
  pages={042404},
  year={2020},
  publisher={APS}
}

@article{bolnick2011intraspecific,
  title={Why intraspecific trait variation matters in community ecology},
  author={Bolnick, Daniel I and Amarasekare, Priyanga and Ara{\'u}jo, M{\'a}rcio S and B{\"u}rger, Reinhard and Levine, Jonathan M and Novak, Mark and Rudolf, Volker HW and Schreiber, Sebastian J and Urban, Mark C and Vasseur, David A},
  journal={Trends in ecology \& evolution},
  volume={26},
  number={4},
  pages={183--192},
  year={2011},
  publisher={Elsevier}
}

@article{lam2023hamilton,
  title={A Hamilton-Jacobi approach to evolution of dispersal},
  author={Lam, King-Yeung and Lou, Yuan and Perthame, Beno{\^i}t},
  journal={Communications in Partial Differential Equations},
  volume={48},
  number={1},
  pages={86--118},
  year={2023},
  publisher={Taylor \& Francis}
}

@article{barton2017infinitesimal,
  title={The infinitesimal model: Definition, derivation, and implications},
  author={Barton, Nicholas H and Etheridge, Alison M and V{\'e}ber, Amandine},
  journal={Theoretical population biology},
  volume={118},
  pages={50--73},
  year={2017},
  publisher={Elsevier}
}

@article{fisher1919xv,
  title={XV.—The correlation between relatives on the supposition of Mendelian inheritance.},
  author={Fisher, Ronald A},
  journal={Earth and Environmental Science Transactions of the Royal Society of Edinburgh},
  volume={52},
  number={2},
  pages={399--433},
  year={1919},
  publisher={Royal Society of Edinburgh Scotland Foundation}
}

@article{iglesias2021selection,
  title={Selection and mutation in a shifting and fluctuating environment},
  author={Iglesias, Susely Figueroa and Mirrahimi, Sepideh},
  journal={arXiv preprint arXiv:2103.07317},
  year={2021}
}

@article{roques2020adaptation,
  title={Adaptation in general temporally changing environments},
  author={Roques, Lionel and Patout, Florian and Bonnefon, Olivier and Martin, Guillaume},
  journal={SIAM Journal on Applied Mathematics},
  volume={80},
  number={6},
  pages={2420--2447},
  year={2020},
  publisher={SIAM}
}
\end{document}